\newtheorem{theorem}{Theorem}[section]
\newtheorem{lemma}{Lemma}[section]
\newtheorem{proposition}{Proposition}[section]
\newtheorem{remark}{Remark}[section]
\numberwithin{equation}{section}
\newenvironment{proof}{\medskip\par\noindent{\bf Proof.}\ }{\qquad
\raisebox{-0.5mm}{\rule{1.5mm}{4mm}}\vspace{6pt}}
\newcommand{\bbr}{\mathbb{R}}
\newcommand{\h}{H^1_0(\mathbb{B}_R)}
\newcommand{\bbn}{\mathbb{N}}
\begin{document}
\title
{\Large\bf On finding solutions of a Kirchhoff type problem}%

\author{
Yisheng Huang$^{a},$\thanks{E-mail address: yishengh@suda.edu.cn(Yisheng Huang)}\quad
Zeng Liu$^{b},$\thanks{E-mail address: luckliuz@163.com(Zeng Liu)}\quad
Yuanze Wu$^{c}$\thanks{Corresponding
author. E-mail address: wuyz850306@cumt.edu.cn (Yuanze Wu).}\\%
\footnotesize$^{a}${\em  Department of Mathematics, Soochow University, Suzhou 215006, P.R. China }\\%
\footnotesize$^{b}${\em  Department of Mathematics, Suzhou University of Science and Technology, Suzhou 215009, P.R. China}\\
\footnotesize$^{c}${\em  College of Sciences, China University of Mining and Technology, Xuzhou 221116, P.R. China }}%
\date{}
\maketitle

\noindent{\bf Abstract:} Consider the following Kirchhoff type problem
$$
\left\{\aligned -\bigg(a+b\int_{\mathbb{B}_R}|\nabla u|^2dx\bigg)\Delta u&= \lambda u^{q-1} + \mu u^{p-1}, &\quad \text{in }\mathbb{B}_R, \\
u&>0,&\quad\text{in }\mathbb{B}_R,\\
u&=0,&\quad\text{on }\partial\mathbb{B}_R,
\endaligned
\right.\eqno{(\mathcal{P})}
$$
where $\mathbb{B}_R\subset \bbr^N(N\geq3)$ is a ball, $2\leq q<p\leq2^*:=\frac{2N}{N-2}$ and $a$, $b$, $\lambda$, $\mu$ are positive parameters.  By introducing some new ideas and using the well-known results of the problem $(\mathcal{P})$ in the cases of $a=\mu=1$ and $b=0$, we obtain some special kinds of solutions to $(\mathcal{P})$ for all $N\geq3$ with precise expressions on the parameters $a$, $b$, $\lambda$, $\mu$, which  reveals some new phenomenons of the solutions to the  problem $(\mathcal{P})$.  It is also worth to point out that it seems to be the first time that the solutions of $(\mathcal{P})$ can be expressed precisely on the parameters $a$, $b$, $\lambda$, $\mu$, and our results in dimension four also give a partial answer to Neimen's open problems [J. Differential Equations, 257 (2014), 1168--1193].  Furthermore, our results in dimension four seems to be almost ``optimal''.%

\vspace{6mm} \noindent{\bf Keywords:} Kirchhoff type problem; critical Sobolev exponent; positive solution.%

\vspace{6mm}\noindent {\bf AMS} Subject Classification 2010: 35B09; 35B33; 35J15; 35J60.%

\section{Introduction}
In this paper, we study the following Kirchhoff type problem
$$
\left\{\aligned -\bigg(a+b\int_{\Omega}|\nabla u|^2dx\bigg)\Delta u&= \lambda u^{q-1} + \mu u^{p-1}, &\quad \text{in }\Omega, \\
u&>0,&\quad\text{in }\Omega,\\
u&=0,&\quad\text{on }\partial\Omega,
\endaligned
\right.\eqno{(\mathcal{P}_{a,b,\lambda,\mu})}
$$
where $\Omega\subset \bbr^N(N\geq3)$ is a bounded domain with smooth boundary, $2\leq q<p\leq2^*:=\frac{2N}{N-2}$, $2^*$ is the critical Sobolev exponent and $a$, $b$, $\lambda$, $\mu$ are positive parameters.%

The elliptic type Kirchhoff problem (Kirchhoff type problem for short) in a domain $\Omega\subset \bbr^N(1\leq N\leq3)$ has a nice background in physics.  Indeed, such problem is related to the stationary analogue of the following model:
\begin{equation}\label{eq001}
\left\{\aligned &u_{tt}-\bigg(a+b\int_{\Omega}|\nabla u|^2dx\bigg)\Delta u=h(x,u)\quad\text{in }\Omega\times(0, T),\\
&u=0\quad\text{on }\partial\Omega\times(0, T),\\
&u(x,0)=u_0(x),\quad u_t(x,0)=u^*(x),\endaligned\right.
\end{equation}
where $T>0$ is a constant, $u_0, u^*$ are continuous functions.  Such model was first proposed by Kirchhoff in 1883 as an extension of the classical D'Alembert's wave equations for free vibration of elastic strings, Kirchhoff's model takes into account the changes in length of the string produced by transverse vibrations.  In \eqref{eq001}, $u$ denotes the displacement, the nonlinearity $h(x,u)$ denotes the external force and the parameter $a$ denotes the initial tension while the parameter $b$ is related to the intrinsic properties of the string (such as Young¡¯s modulus).  For more details on the physical background of Kirchhoff type problems, we refer the readers to \cite{A12,K83}.

Under some suitable assumptions on the nonlinearities, the Kirchhoff type problem has a variational structure in some proper Hilbert spaces.  Thus, it is natural to study the Kirchhoff type problem by the variational method.  However, since the Kirchhoff term $-b(\int_{\Omega}|\nabla u|^2dx)\Delta u$ is non-local and $u\mapsto -b(\int_{\Omega}|\nabla u|^2dx)\Delta u$ is not weakly continuous, a typical difficulty of such problem by using the variational method is that the weak limit of the $(PS)$ sequence to the corresponding functional is not trivially to be the weak solution of the equation.  In order to overcome this difficulty, several methods have been developed (cf. \cite{CWL12,HZ12,LLS12,PZ06,ZP06}).  Based on these ideas, various existence and multiplicity results of nontrivial solutions for the Kirchhoff type problem in a domain $\Omega\subset \bbr^N(1\leq N\leq3)$ have been established by the variational method in the literatures, see for example \cite{ACM05,F13,LLS14,LLT15,N14,W15} and the references therein for the bounded domain and \cite{AF12,HLP14,LY131,SW14,WTXZ12} and the references therein for the whole space.

Recently, the Kirchhoff type problem in high dimensions $(N\geq4)$ has begun to attract much attention.  From the view point of calculus of variation, such problem is much more complex and difficult since the order of the Kirchhoff type non-local term $-b(\int_{\Omega}|\nabla u|^2dx)\Delta u$ in the corresponding functional is $4$, which equals the critical Sobolev exponent $2^*$ in the case of $N=4$ and is greater than $2^*$ in the case of $N\geq5$.  This fact leads to a big difficulty to obtain the boundedness of the $(PS)$ sequence for the corresponding functional.  By making some very careful and complex analyses on the (PS) sequence, several existence and multiplicity results of nontrivial solutions have been established by the variational method in the literatures, see for example \cite{ACM05,CKW11,HLW15,N141,ZP06} and the references therein.  Based on the above facts, it is natural to ask: {\it Can we find some other simple methods to deal with the Kirchhoff type problem?}  In this paper, we will introduce some new ideas to treat the Kirchhoff type problem $(\mathcal{P}_{a,b,\lambda,\mu})$, which seem to be more simple than that of variation.  Our results will also give the precise expressions of the solutions to $(\mathcal{P}_{a,b,\lambda,\mu})$ on the parameters $a$, $b$, $\lambda$, $\mu$ and reveal some new phenomenons of the solutions to $(\mathcal{P}_{a,b,\lambda,\mu})$.

Our main idea is to establish a relation between solutions of $(\mathcal{P}_{a,b,\lambda,\mu})$ and the following equation $(\mathcal{P}_{\alpha})$ by means of a scaling technique:
$$
\left\{\aligned -\Delta u&= \alpha u^{q-1}+u^{p-1}, &\quad \text{in }\Omega, \\
u&>0,&\quad\text{in }\Omega,\\
u&=0,&\quad\text{on }\partial\Omega.
\endaligned
\right.\eqno{(\mathcal{P}_{\alpha})}
$$
This relation can be stated as follows and its proof will be given in Section~2.
\begin{proposition}\label{prop0001}
Let $u_\alpha$ be a solution of $(\mathcal{P}_{\alpha})$ and let
\begin{eqnarray*}
f_{a,b,\lambda,\mu}(\alpha):=a\bigg(\frac{\alpha\mu^{\frac{q-2}{p-2}}}{\lambda}\bigg)^{\frac{p-2}{p-q}}
+b\bigg(\frac{\alpha\mu^{\frac{q-2}{p-2}}}{\lambda}\bigg)^{\frac{p-4}{p-q}}\mu^{\frac{2}{2-p}}\int_{\Omega}|\nabla u_{\alpha}|^2dx.
\end{eqnarray*}
Then $(\mathcal{P}_{a,b,\lambda,\mu})$ has a solution $\bigg(\frac{\lambda}{\alpha\mu}\bigg)^{\frac{1}{p-q}}u_{\alpha}$ if and only if $f_{a,b,\lambda,\mu}(\alpha)=1$.
\end{proposition}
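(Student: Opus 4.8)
The plan is to verify everything by the explicit scaling $v:=\left(\frac{\lambda}{\alpha\mu}\right)^{\frac{1}{p-q}}u_\alpha$ and to read the statement as an algebraic identity dressed up as a PDE equivalence. First I would set $t:=\left(\frac{\lambda}{\alpha\mu}\right)^{\frac{1}{p-q}}$, so that $t>0$ (here $p-q>0$ and all parameters are positive) and, crucially, $t^{p-q}=\frac{\lambda}{\alpha\mu}$, equivalently $\lambda=\alpha\mu\,t^{p-q}$. Write $A:=\int_\Omega|\nabla u_\alpha|^2\,dx$. Since $v=t\,u_\alpha$ is a positive scalar multiple of $u_\alpha$, the conditions $v>0$ in $\Omega$, $v=0$ on $\partial\Omega$, and membership in the relevant function space are inherited from $u_\alpha$ for free; only the differential equation itself needs checking.

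Next I would compute both sides of $(\mathcal{P}_{a,b,\lambda,\mu})$ evaluated at $v$. Because $u_\alpha$ solves $(\mathcal{P}_\alpha)$, linearity of the Laplacian gives $-\Delta v=t(-\Delta u_\alpha)=t(\alpha u_\alpha^{q-1}+u_\alpha^{p-1})$, while the nonlocal coefficient scales quadratically, $a+b\int_\Omega|\nabla v|^2\,dx=a+bt^2A$. Hence the left-hand side equals $(a+bt^2A)\,t\,(\alpha u_\alpha^{q-1}+u_\alpha^{p-1})$. For the right-hand side, $\lambda v^{q-1}+\mu v^{p-1}=\lambda t^{q-1}u_\alpha^{q-1}+\mu t^{p-1}u_\alpha^{p-1}$, and the point of the choice of $t$ is that $\lambda t^{q-1}=\alpha\mu\,t^{p-q}t^{q-1}=\alpha\mu\,t^{p-1}$, so the right-hand side collapses to $\mu t^{p-1}(\alpha u_\alpha^{q-1}+u_\alpha^{p-1})$, i.e. the very same function as on the left, up to a scalar factor. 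Since $\alpha u_\alpha^{q-1}+u_\alpha^{p-1}\not\equiv0$, equality of the two scalar factors is necessary and sufficient, so $v$ solves $(\mathcal{P}_{a,b,\lambda,\mu})$ if and only if $(a+bt^2A)\,t=\mu t^{p-1}$, that is $a+bt^2A=\mu t^{p-2}$. This single scalar equation yields both implications of the ``if and only if'' at once, and notably no linear-independence argument for the two powers $u_\alpha^{q-1}$, $u_\alpha^{p-1}$ is needed, precisely because the specific $t$ makes both sides proportional to one common function.

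It then remains to recognize $a+bt^2A=\mu t^{p-2}$ as $f_{a,b,\lambda,\mu}(\alpha)=1$. I would divide by $\mu t^{p-2}$ to get $\frac{a}{\mu}t^{2-p}+\frac{bA}{\mu}t^{4-p}=1$ and substitute $t=\left(\frac{\lambda}{\alpha\mu}\right)^{\frac{1}{p-q}}$. The first term becomes $a\left(\frac{\alpha\mu^{(q-2)/(p-2)}}{\lambda}\right)^{(p-2)/(p-q)}$ after collecting the $\mu$-powers via $\frac{p-2}{p-q}-1=\frac{q-2}{p-q}$, matching the first summand of $f_{a,b,\lambda,\mu}$. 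The hard part will be the bookkeeping in the $b$-term, where the $\mu$-exponent of $f$ is written compactly as $\mu^{2/(2-p)}$: one must check the algebraic identity $(q-2)(p-4)-2(p-q)=(q-4)(p-2)$, which makes the total exponent of $\mu$ equal to $\frac{q-4}{p-q}$ and reproduces exactly $b\left(\frac{\alpha\mu^{(q-2)/(p-2)}}{\lambda}\right)^{(p-4)/(p-q)}\mu^{2/(2-p)}A$. This exponent-matching is purely computational rather than a genuine obstruction; everything substantive follows from linearity of $-\Delta$, the quadratic scaling of the Dirichlet energy, and the defining relation $t^{p-q}=\lambda/(\alpha\mu)$.
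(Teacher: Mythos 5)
Your proof is correct: the pointwise computation, the reduction to the single scalar equation $a+bt^{2}A=\mu t^{p-2}$, and the exponent bookkeeping (including the identity $(q-2)(p-4)-2(p-q)=(q-4)(p-2)$) all check out. Your route is the same scaling technique as the paper's but with a different logical structure, and the difference is worth recording. The paper does not plug in the known factor; it first rescales $u_\alpha$ by $\mu^{\frac{1}{2-p}}$ to obtain a solution $\psi_{\alpha,\mu}$ of an intermediate problem with nonlinearity $\alpha\mu^{\frac{q-2}{p-2}}u^{q-1}+\mu u^{p-1}$, then asks for which $s$ the multiple $s\psi_{\alpha,\mu}$ solves $(\mathcal{P}_{a,b,\lambda,\mu})$; matching the coefficients of $\varphi^{q-1}$ and $\varphi^{p-1}$ separately produces a system of two equations in $(s,\alpha)$, and eliminating $s$ yields simultaneously the formula for the solution and the condition $f_{a,b,\lambda,\mu}(\alpha)=1$. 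That version \emph{derives} the scaling factor rather than verifying it, which is what generalizes to the situation described in part (c) of the remark following the proposition (nonlinearities $\sum_{i=1}^{n}\theta_i u^{p_i}$ lead to an $n$-component system); on the other hand, its ``only if'' direction tacitly uses that $\varphi^{q-1}$ and $\varphi^{p-1}$ are linearly independent functions, a true but unstated fact. Your verification sidesteps exactly this point: the chosen $t$ makes the right-hand side collapse onto the same function $\alpha u_\alpha^{q-1}+u_\alpha^{p-1}$ already present on the left, so the equivalence is a comparison of two scalars multiplying one nonzero function, and the ``only if'' direction costs nothing. The trade-off is that your argument does not explain where $t$ comes from, while the paper's derivation does.
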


\begin{remark}{\em
\begin{enumerate}
\item[$(a)$] By Proposition~\ref{prop0001}, we can obtain some special kinds of solutions to $(\mathcal{P}_{a,b,\lambda,\mu})$ with precise expressions on the parameters $a$, $b$, $\lambda$, $\mu$ by solving the equation $f_{a,b,\lambda,\mu}(\alpha)=1$ for $\alpha$.  Furthermore, unlike the the variational method, our method does not  need to analyse the (PS) sequence of the corresponding functional to $(\mathcal{P}_{a,b,\lambda,\mu})$.
\item[$(b)$] The proof of Proposition~\ref{prop0001} is based upon an idea used in \cite{ACM05} and  \cite{WHL15} to respectively  treat two kinds of Kirchhoff type problems:
\begin{equation}\label{eq9999}
\left\{\aligned -\bigg(a+b\int_{\Omega}|\nabla u|^2dx\bigg)\Delta u&=f(x), &\quad \text{in }\Omega, \\
u&=0,&\quad\text{on }\partial\Omega
\endaligned
\right.
\end{equation}
and
\begin{equation}\label{eq9998}
\left\{\aligned&\bigg(\alpha\int_{\bbr^N}(|\nabla u|^2+u^2)dx+\beta\bigg)(-\Delta u+u)=|u|^{p-2}u&\text{ in }\bbr^N,\\
&u\in H^1(\bbr^N),\endaligned\right.
\end{equation}
where $2<p<2^*$. Since the local terms of \eqref{eq9999} and \eqref{eq9998} are homogeneous, by using a scaling techniques, one can obtain solutions of \eqref{eq9999} and \eqref{eq9998} by solving two equations whose properties are very clear (cf. \cite{ACM05,WHL15}).
Unlike \eqref{eq9999} and \eqref{eq9998}, the local term of $(\mathcal{P}_{a,b,\lambda,\mu})$ is inhomogeneous, we need to solve a more difficult equation $f_{a,b,\lambda,\mu}(\alpha)=1$ to obtain solutions of $(\mathcal{P}_{a,b,\lambda,\mu})$ due to the function $\int_{\Omega}|\nabla u_{\alpha}|^2dx$.
\item[$(c)$] Our method can also be used to deal with the Kirchhoff type problem with the nonlinearities $\sum_{i=1}^n\theta_iu^{p_i}$, where $\theta_i$ are constants and $2\leq p_i\leq 2^*$ for all $i=1,2,\cdots,n$.  In this case, in order to observe a similar result to Proposition~\ref{prop0001}, a more complex $n$--components nonlinear system need to be studied.
\end{enumerate}}
\end{remark}

According to Proposition~\ref{prop0001}, in order to obtain solutions of $(\mathcal{P}_{a,b,\lambda,\mu})$, we need to solve the equation $f_{a,b,\lambda,\mu}(\alpha)=1$ in $\bbr$. However,  due to the function $\int_{\Omega}|\nabla u_{\alpha}|^2dx$, this equation is not easy to solve on a general bounded domain $\Omega$ (more reasons will be given in $(a)$ of Remark~\ref{rmk0003}).  For the sake of demonstrating well our ideas, we  mainly consider the problem $(\mathcal{P}_{a,b,\lambda,\mu})$ with $\Omega=\mathbb{B}_R$, i.e.
$$
\left\{\aligned -\bigg(a+b\int_{\mathbb{B}_R}|\nabla u|^2dx\bigg)\Delta u&= \lambda u^{q-1} + \mu u^{p-1}, &\quad \text{in }\mathbb{B}_R, \\
u&>0,&\quad\text{in }\mathbb{B}_R,\\
u&=0,&\quad\text{on }\partial\mathbb{B}_R,
\endaligned
\right.
$$
where $\mathbb{B}_R\subset \bbr^N(N\geq3)$ is a ball.

Before we state results,  we shall give some notations.  Let $\mathcal{E}(u):=\frac12\|\nabla u\|_{L^2(\mathbb{B}_R)}^2-\frac1p\|u\|_{L^p(\mathbb{B}_R)}^p$, where $\|\cdot\|_{L^r(\mathbb{B}_R)}(r\geq1)$ is the usual norm in $L^r(\mathbb{B}_R)$.  Then it is easy to see that $\mathcal{E}(u)$ is of $C^2$ in $\h$.  Furthermore, positive critical points of $\mathcal{E}(u)$ are solutions of $(\mathcal{P}_0)$.  Let
\begin{eqnarray*}
\mathcal{N}:=\{u\in\h\mid\mathcal{E}'(u)u=0\}
\end{eqnarray*}
and define $m_0:=\inf_{u\in\mathcal{N}}\mathcal{E}(u)$.  Now, our main results in this paper can be stated as follows.
\begin{theorem}\label{thm0001}
Let $a,b,\lambda,\mu>0$, $\Omega=\mathbb{B}_R$ and $2=q<p<2^*$.
\begin{enumerate}
\item[$(1)$] $(\mathcal{P}_{a,b,\lambda,\mu})$ has a radial solution if one of the following four cases holds:
\begin{enumerate}
\item[$(i)$] $p>4$ and $\lambda<a\lambda_1$;
\item[$(ii)$] $p=4$, $\lambda<a\lambda_1$ and $\frac{2p}{p-2}m_0 b\mu^{-1}<1$;
\item[$(iii)$] $p=4$, $\lambda>a\lambda_1$ and $\frac{2p}{p-2}m_0 b\mu^{-1}>1$;
\item[$(iii)$] $p<4$ and $\lambda>a\lambda_1$.
\end{enumerate}
\item[$(2)$] If
\begin{eqnarray*}
\frac{2}{(p-2)\mu}\bigg(\frac{(p-2)a}{4-p}\bigg)^{\frac{4-p}{2}}
\bigg(b\lambda_1|\mathbb{B}_R|^{\frac{p-2}{p}}\bigg(\frac{2p}{p-2}m_0\bigg)^{\frac2p}+b\frac{2p}{p-2}m_0\bigg)^{\frac{p-2}{2}}<1
\end{eqnarray*}
then $(\mathcal{P}_{a,b,\lambda,\mu})$ has two radial solutions in the case of $p<4$ and $\lambda<a\lambda_1$.
\end{enumerate}
\end{theorem}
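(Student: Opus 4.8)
The plan is to reduce the whole theorem, via Proposition~\ref{prop0001} with $q=2$, to solving the single scalar equation $f_{a,b,\lambda,\mu}(\alpha)=1$ for $\alpha$ in the admissible range. Since $q=2$ forces $\mu^{\frac{q-2}{p-2}}=1$ and $p-q=p-2$, the function collapses to
\begin{equation*}
f_{a,b,\lambda,\mu}(\alpha)=\frac{a}{\lambda}\,\alpha+b\,\mu^{\frac{2}{2-p}}\Big(\frac{\alpha}{\lambda}\Big)^{\frac{p-4}{p-2}}g(\alpha),\qquad g(\alpha):=\int_{\mathbb{B}_R}|\nabla u_\alpha|^2\,dx,
\end{equation*}
where $u_\alpha$ is the positive solution of $(\mathcal{P}_\alpha)$. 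Testing $(\mathcal{P}_\alpha)$ against the first eigenfunction $\phi_1$ shows that a positive solution forces $\alpha<\lambda_1$, while the scaling factor $(\lambda/(\alpha\mu))^{1/(p-2)}$ forces $\alpha>0$; hence the admissible interval is $(0,\lambda_1)$. On $\mathbb{B}_R$ and for subcritical $p$ I would invoke the well-known facts that $u_\alpha$ is radial (Gidas--Ni--Nirenberg), unique and non-degenerate, so that $\alpha\mapsto u_\alpha$ is continuous (indeed $C^1$, by the implicit function theorem) and $g$ is a continuous function on $(0,\lambda_1)$.

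Next I would pin down the two endpoint limits, which drive part~(1). Multiplying $(\mathcal{P}_\alpha)$ by $u_\alpha$ gives $g(\alpha)=\alpha\|u_\alpha\|_{L^2}^2+\|u_\alpha\|_{L^p}^p$, and the energy identity yields $\|u_\alpha\|_{L^p}^p=\frac{2p}{p-2}m_\alpha$, where $m_\alpha$ is the ground-state level of $(\mathcal{P}_\alpha)$. As $\alpha\to0^+$ one has $m_\alpha\to m_0$, so $g(\alpha)\to\frac{2p}{p-2}m_0$; consequently $f(0^+)$ equals $0$, $\frac{2p}{p-2}m_0\,b\mu^{-1}$, or $+\infty$ according as $p>4$, $p=4$, or $p<4$, reading off the sign of the exponent $\frac{p-4}{p-2}$ of $\alpha$. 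As $\alpha\to\lambda_1^-$, using $\phi_1$ as a test direction in the Nehari characterization gives $m_\alpha\to0$, hence $\|u_\alpha\|_{L^p}\to0$, and by H\"older $\|u_\alpha\|_{L^2}\to0$, so $g(\alpha)\to0$ and $f(\lambda_1^-)=\frac{a\lambda_1}{\lambda}$. With these limits each of the four cases in part~(1) is immediate from the intermediate value theorem: in every case one endpoint value of $f$ lies strictly below $1$ and the other strictly above $1$ (the threshold being exactly $\lambda=a\lambda_1$ and, when $p=4$, $\frac{2p}{p-2}m_0\,b\mu^{-1}=1$), so $f(\alpha)=1$ has a root, which Proposition~\ref{prop0001} converts into a radial solution.

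For part~(2) one has $p<4$ and $\lambda<a\lambda_1$, so both endpoints satisfy $f(0^+)=+\infty$ and $f(\lambda_1^-)=\frac{a\lambda_1}{\lambda}>1$; two roots will follow once I produce an interior value with $f<1$. The key estimate is a uniform bound $g(\alpha)\le G$ on $(0,\lambda_1)$: since $m_\alpha$ is decreasing in $\alpha$ we have $m_\alpha\le m_0$, whence $\|u_\alpha\|_{L^p}^p\le\frac{2p}{p-2}m_0$, and H\"older on $\mathbb{B}_R$ together with $\alpha<\lambda_1$ gives $G=\lambda_1|\mathbb{B}_R|^{\frac{p-2}{p}}\big(\frac{2p}{p-2}m_0\big)^{\frac2p}+\frac{2p}{p-2}m_0$. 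Then $f(\alpha)\le h(\alpha):=\frac{a}{\lambda}\alpha+b\,\mu^{\frac{2}{2-p}}\lambda^{\frac{4-p}{p-2}}G\,\alpha^{\frac{p-4}{p-2}}$, a sum of a linear term and a negative power of $\alpha$, hence coercive at both ends with a unique interior minimum that I would compute by elementary calculus. A direct computation (using $1-\frac{p-4}{p-2}=\frac{2}{p-2}$, so that the minimizing exponents become $\frac{4-p}{2}$ and $\frac{p-2}{2}$) shows $\min_\alpha h$ equals precisely the left-hand side of the displayed hypothesis of part~(2); thus that hypothesis is exactly the statement $\min_\alpha h<1$, and therefore $\min_\alpha f\le\min_\alpha h<1$. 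Choosing $\alpha_0$ with $f(\alpha_0)<1$ and applying the intermediate value theorem separately on $(0,\alpha_0)$ and on $(\alpha_0,\lambda_1)$ yields two distinct roots of $f=1$, which give two distinct radial solutions of $(\mathcal{P}_{a,b,\lambda,\mu})$ (distinctness follows because, from a fixed solution $w$, the relation $-\Delta w=\alpha(w+\frac{\mu}{\lambda}w^{p-1})$ determines $\alpha$ uniquely).

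The main obstacle, and the step demanding the most care, is the analysis of $g(\alpha)$: establishing its continuity (which rests on uniqueness and non-degeneracy of the positive solution of $(\mathcal{P}_\alpha)$ on the ball) and, above all, the two endpoint asymptotics---especially $g(\alpha)\to0$ as $\alpha\to\lambda_1^-$, which is exactly what makes $f(\lambda_1^-)$ finite and equal to $\frac{a\lambda_1}{\lambda}$ and thereby separates the cases through the sign of $\lambda-a\lambda_1$. Once $g$ is under control, both parts reduce to the intermediate value theorem together with, for part~(2), the single explicit minimization that reproduces the stated constant.
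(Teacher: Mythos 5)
Your proposal is correct and follows the paper's overall skeleton exactly: reduce via Proposition~\ref{prop0001} (with $q=2$) to the scalar equation $f_{a,b,\lambda,\mu}(\alpha)=1$ on $(0,\lambda_1)$, compute the limits of $f$ at both endpoints, apply the intermediate value theorem for part (1), and for part (2) dominate $f$ by the explicit function $h(\alpha)=\frac{a}{\lambda}\alpha+b\mu^{\frac{2}{2-p}}\lambda^{\frac{4-p}{p-2}}\mathcal{C}\,\alpha^{\frac{p-4}{p-2}}$ and check that the hypothesis is exactly $\min_\alpha h<1$; your constant $G$ coincides with the paper's $\mathcal{C}$ and your minimizer coincides with the point at which the paper evaluates $f$. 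Where you differ is in how the two key analytic inputs are established. For continuity of $\alpha\mapsto\int_{\mathbb{B}_R}|\nabla u_\alpha|^2dx$, the paper (Lemma~\ref{lem0002}) uses only \emph{uniqueness} of the ground state together with a variational compactness argument ($m_{\alpha_n}\to m_{\alpha_0}$, boundedness, weak-to-strong convergence), whereas you invoke non-degeneracy and the implicit function theorem; that is a strictly stronger input (known for the ball, e.g.\ via exact-multiplicity results, but heavier than necessary), and if you only want continuity the paper's route is the more economical one. For the endpoint limits, your argument is actually \emph{more} elementary than the paper's Lemma~\ref{lem0004}: you read off $\|u_\alpha\|_{L^p}^p=\frac{2p}{p-2}m_\alpha$ from the Nehari and energy identities, get $m_\alpha\to 0$ as $\alpha\uparrow\lambda_1$ by testing with the first eigenfunction, and conclude $g(\lambda_1^-)=0$ directly, while the paper extracts weakly convergent sequences and uses the nonexistence of solutions of $(\mathcal{P}_{\lambda_1})$ to identify the limit as zero. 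Finally, you explicitly verify that distinct roots $\alpha_1\neq\alpha_2$ produce distinct solutions (via the pointwise relation $-\Delta w=\alpha(w+\frac{\mu}{\lambda}w^{p-1})$), a point the paper passes over in silence; this is a genuine, if small, improvement in completeness.
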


\begin{theorem}\label{thm0002}
Let $a,b,\lambda,\mu>0$, $\Omega=\mathbb{B}_R$ and $2<q<p<2^*$.
\begin{enumerate}
\item[$(1)$] $(\mathcal{P}_{a,b,\lambda,\mu})$ has a radial solution if one of the following two cases holds:
\begin{enumerate}
\item[$(i)$] $N=3$, $p>4$ and $(q-1)(p+1)\leq\frac 32$;
\item[$(ii)$] $N=3$, $p=4$, $(q-1)(p+1)\leq\frac 32$ and $\frac{2p}{p-2}m_0 b\mu^{-1}<1$.
\end{enumerate}
\item[$(2)$] If $p<4$ and
\begin{eqnarray*}
\frac{2}{(p-2)\mu}\bigg(\frac{(p-2)a}{(4-p)}\bigg)^{\frac{4-p}{2}}\bigg(\frac{2q m_0b}{q-2}\bigg)^{\frac{p-2}{2}}<1
\end{eqnarray*}
then $(\mathcal{P}_{a,b,\lambda,\mu})$ has two radial solutions under one of the following two cases:
\begin{enumerate}
\item[$(i)$] $3\leq N\leq 5$ and $(q-1)(p+1)\leq\frac N2$;
\item[$(ii)$] $N\geq6$.
\end{enumerate}
\end{enumerate}
\end{theorem}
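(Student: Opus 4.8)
The plan is to invoke Proposition~\ref{prop0001}: since every radial solution of $(\mathcal{P}_{a,b,\lambda,\mu})$ on $\mathbb{B}_R$ is of the form $(\lambda/(\alpha\mu))^{1/(p-q)}u_\alpha$ for some root $\alpha>0$ of $f_{a,b,\lambda,\mu}(\alpha)=1$, the entire theorem reduces to counting the roots of this scalar equation on $(0,\infty)$. Writing $\beta:=\mu^{(q-2)/(p-2)}/\lambda$ and $D(\alpha):=\int_{\mathbb{B}_R}|\nabla u_\alpha|^2\,dx$, one has $f_{a,b,\lambda,\mu}(\alpha)=a(\beta\alpha)^{\frac{p-2}{p-q}}+b\mu^{\frac{2}{2-p}}(\beta\alpha)^{\frac{p-4}{p-q}}D(\alpha)$, so the signs of the two exponents $\frac{p-2}{p-q}>0$ and $\frac{p-4}{p-q}$ govern the global shape of $f$, and everything turns on controlling $D(\alpha)$.

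First I would record the relevant properties of $D$. In the subcritical regime $2<q<p<2^*$, the problem $(\mathcal{P}_\alpha)$ possesses for every $\alpha\ge0$ a positive solution (radial, by Gidas--Ni--Nirenberg) realized as a ground state on its Nehari manifold; the dimension/exponent hypotheses — $(q-1)(p+1)\le N/2$ when $3\le N\le5$, and no extra restriction when $N\ge6$ — are precisely the known conditions guaranteeing that this positive radial solution $u_\alpha$ is unique. Uniqueness, the uniform bound $m_\alpha\le m_0$, and subcritical compactness together make $\alpha\mapsto u_\alpha$ continuous into $H^1_0(\mathbb{B}_R)$, so $D$ is a continuous function on $[0,\infty)$. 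Two elementary variational identities complete the picture: the Nehari relation $D(\alpha)=\alpha\|u_\alpha\|_{L^q(\mathbb{B}_R)}^q+\|u_\alpha\|_{L^p(\mathbb{B}_R)}^p$ together with $q<p$ yields the upper bound $D(\alpha)\le\frac{2q}{q-2}m_\alpha\le\frac{2q}{q-2}m_0$, while at $\alpha=0$ it gives the exact value $D(0)=\frac{2p}{p-2}m_0$.

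With $D$ continuous and bounded I would then analyse $f$ case by case. When $p>4$ both exponents are positive, so $f(0^+)=0<1$ while $f(\alpha)\to\infty$ as $\alpha\to\infty$ (the first term dominates), and the intermediate value theorem yields one root, proving case (1)(i). When $p=4$ the second exponent vanishes and $f(0^+)=b\mu^{-1}D(0)=\frac{2p}{p-2}m_0b\mu^{-1}$, so the hypothesis $\frac{2p}{p-2}m_0b\mu^{-1}<1$ forces $f(0^+)<1<f(\infty)$ and again gives one root, proving case (1)(ii). When $p<4$ the second exponent is negative, whence $f(0^+)=+\infty$ (from the $D(0)>0$ term) and $f(\infty)=+\infty$ (from the first term); to produce two roots it suffices to exhibit a single point where $f<1$. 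Here I would use $D(\alpha)\le\frac{2q}{q-2}m_0$ to dominate $f$ by the two-power function $\widetilde f(\alpha):=a(\beta\alpha)^{\frac{p-2}{p-q}}+b\mu^{\frac{2}{2-p}}\frac{2q}{q-2}m_0(\beta\alpha)^{\frac{p-4}{p-q}}$, minimise $\widetilde f$ explicitly over $\alpha>0$, and check that its minimum value is exactly the left-hand side of the displayed inequality in part~(2); that hypothesis therefore reads $\min\widetilde f<1$, so $f$ dips below $1$ at the minimiser and the two $+\infty$ limits force a root on each side, giving the two solutions of part~(2).

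The main obstacle is the first step, namely showing that $D(\alpha)=\int_{\mathbb{B}_R}|\nabla u_\alpha|^2\,dx$ is a genuine, continuous, single-valued function of $\alpha$. This is exactly where the otherwise mysterious conditions $(q-1)(p+1)\le N/2$ enter: they are the hypotheses under which the positive radial solution of $(\mathcal{P}_\alpha)$ is known to be unique, and without such uniqueness $D$ could be multivalued or discontinuous and all of the intermediate value arguments would collapse; the case split $3\le N\le5$ versus $N\ge6$ reflects which uniqueness result is available in each dimension. By contrast the variational facts $D(0)=\frac{2p}{p-2}m_0$ and $D(\alpha)\le\frac{2q}{q-2}m_0$, and the explicit minimisation of the majorant $\widetilde f$ in the $p<4$ case, are routine once this continuity is in hand.
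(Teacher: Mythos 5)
Your proposal is correct and follows essentially the same route as the paper: reduce to the scalar equation $f_{a,b,\lambda,\mu}(\alpha)=1$ via Proposition~\ref{prop0001}, use the uniqueness results (Zhang for $3\le N\le 5$ under $(q-1)(p+1)\le N/2$, Erbe--Tang for $N\ge 6$) to get $(0,+\infty)\subset\mathcal{D}$ and hence continuity of $f$ (Lemma~\ref{lem0002}), compute the limits of $f$ at $0^+$ and $+\infty$ and apply the intermediate value theorem for part $(1)$, and for part $(2)$ bound $\int_{\mathbb{B}_R}|\nabla u_\alpha|^2dx$ by $\frac{2q}{q-2}m_0$ via the Nehari identity and minimize the resulting two-power majorant, whose minimum value is exactly the quantity in the hypothesis. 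Your explicit identification of the majorant's minimizer and minimum value matches the paper's evaluation point $\frac{\lambda}{\mu}\big(\frac{(4-p)b\mathcal{C}_1}{(p-2)a}\big)^{\frac{p-q}{2}}$ with $\mathcal{C}_1=\frac{2q}{q-2}m_0$, so no substantive difference remains.
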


\begin{theorem}\label{thm0003}
Let $a,b,\lambda,\mu>0$, $\Omega=\mathbb{B}_R$ and $2=q<p=2^*$.
\begin{enumerate}
\item[$(1)$] $(\mathcal{P}_{a,b,\lambda,\mu})$ has a radial solution if one of the following five cases holds:
\begin{enumerate}
\item[$(i)$] $0<\lambda<a\lambda_1$ and $\frac{a}{4}+\frac{b\mathcal{S}^{\frac32}}{2\mu^{\frac12}}>1$ in the case $N=3$;
\item[$(ii)$] $\lambda>a\lambda_1$ and $\frac{a}{4}+\frac{b\mathcal{S}^{\frac32}}{2\mu^{\frac12}}<1$ in the case $N=3$;
\item[$(iii)$] $0<\lambda<a\lambda_1$ and $\mu>b\mathcal{S}^2$ in the case $N=4$;
\item[$(iv)$] $\lambda>a\lambda_1$ and $\mu<b\mathcal{S}^2$ in the case $N=4$;
\item[$(v)$] $\lambda>a\lambda_1$ in the case $N\geq5$,
\end{enumerate}
where $\mathcal{S}>0$ is the usual Sobolev constant given by%
\begin{equation*}
\mathcal{S}=\inf\{\|\nabla u\|_{L^2(\Omega)}^2\mid u\in H_0^{1}(\Omega), \|u\|_{L^{2^*}(\Omega)}^2=1\}.%
\end{equation*}

\item[$(2)$] If
\begin{eqnarray*}
\frac{N-2}{2\mu}\bigg(\frac{2a}{N-4}\bigg)^{\frac{N-4}{N-2}}
\bigg(b\lambda_1|\mathbb{B}_R|^{\frac2N}\mathcal{S}^{\frac{N-2}{2}}+b\mathcal{S}^{\frac N2}\bigg)^{\frac{2}{N-2}}<1
\end{eqnarray*}
then $(\mathcal{P}_{a,b,\lambda,\mu})$ has two radial solutions in the case $0<\lambda<a\lambda_1$ and $N\geq5$.
\end{enumerate}
\end{theorem}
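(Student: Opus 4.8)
The plan is to read off solutions of $(\mathcal{P}_{a,b,\lambda,\mu})$ from Proposition~\ref{prop0001}: since here $q=2$ and $p=2^{*}$, each $\alpha$ for which $(\mathcal{P}_{\alpha})$ is solvable and $f_{a,b,\lambda,\mu}(\alpha)=1$ yields the radial solution $(\lambda/(\alpha\mu))^{1/(p-q)}u_{\alpha}$. Now $(\mathcal{P}_{\alpha})$ is exactly the Brezis--Nirenberg problem $-\Delta u=\alpha u+u^{2^{*}-1}$, whose positive solutions on $\mathbb{B}_R$ are radial and unique and exist precisely for $\alpha\in(\alpha^{*},\lambda_1)$, where $\alpha^{*}=0$ if $N\ge4$ and $\alpha^{*}=\lambda_1/4$ if $N=3$. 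Writing $G(\alpha):=\|\nabla u_{\alpha}\|_{L^{2}(\mathbb{B}_R)}^{2}$ and specializing the formula of Proposition~\ref{prop0001} gives
\begin{equation*}
f_{a,b,\lambda,\mu}(\alpha)=\frac{a\alpha}{\lambda}+b\mu^{-\frac{N-2}{2}}\Big(\frac{\lambda}{\alpha}\Big)^{\frac{N-4}{2}}G(\alpha),
\end{equation*}
so the whole problem reduces to counting the $\alpha\in(\alpha^{*},\lambda_1)$ at which this continuous function equals $1$.

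First I would determine the behaviour of $G$ at the two ends of $(\alpha^{*},\lambda_1)$. As $\alpha\uparrow\lambda_1$ the solution bifurcates from zero, so $G(\alpha)\to0$; as $\alpha\downarrow\alpha^{*}$ the solution concentrates and, by the Brezis--Nirenberg energy analysis, $G(\alpha)\to\mathcal{S}^{N/2}$. Substituting into $f$ produces the endpoint limits: for $N\ge5$ the factor $(\lambda/\alpha)^{(N-4)/2}$ forces $f\to+\infty$ as $\alpha\downarrow0$ while $f\to a\lambda_1/\lambda$ as $\alpha\uparrow\lambda_1$; for $N=4$ that factor is constant, giving $f(0^{+})=b\mathcal{S}^{2}/\mu$ and $f(\lambda_1^{-})=a\lambda_1/\lambda$; and for $N=3$ one obtains $f(\lambda_1^{-})=a\lambda_1/\lambda$ together with the concentration value at $\alpha=\lambda_1/4$ carrying the factor $b\mathcal{S}^{3/2}/\mu^{1/2}$. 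Continuity of $\alpha\mapsto G(\alpha)$, which I would obtain from the uniqueness and smooth dependence of the Brezis--Nirenberg branch on the ball, makes $f$ continuous on the whole interval.

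Each case of part~(1) then becomes an intermediate-value statement, the displayed inequalities being exactly the requirement that the two endpoint limits of $f$ straddle $1$. For instance $(v)$ uses $f(0^{+})=+\infty>1>a\lambda_1/\lambda=f(\lambda_1^{-})$ when $\lambda>a\lambda_1$; the $N=4$ cases $(iii)$--$(iv)$ compare $a\lambda_1/\lambda$ and $b\mathcal{S}^{2}/\mu$ with $1$; and the $N=3$ cases $(i)$--$(ii)$ play $a\lambda_1/\lambda$ against $1$ and the value at the concentration endpoint $\lambda_1/4$, encoded by the condition on $\tfrac a4+\tfrac{b\mathcal{S}^{3/2}}{2\mu^{1/2}}$. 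In every case continuity plus a sign change yields a root $\alpha$, hence a radial solution.

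For part~(2) ($N\ge5$, $0<\lambda<a\lambda_1$) both endpoint limits exceed $1$, since $f(0^{+})=+\infty$ and $f(\lambda_1^{-})=a\lambda_1/\lambda>1$; a root can therefore only be forced by pushing $f$ below $1$ at an interior point. Here I would bound $G$ uniformly: from the Nehari identity $G(\alpha)=\alpha\|u_{\alpha}\|_{2}^{2}+\|u_{\alpha}\|_{2^{*}}^{2^{*}}$, the subcritical level $\|u_{\alpha}\|_{2^{*}}^{2^{*}}<\mathcal{S}^{N/2}$, and $\alpha\|u_{\alpha}\|_{2}^{2}\le\lambda_1|\mathbb{B}_R|^{2/N}\|u_{\alpha}\|_{2^{*}}^{2}<\lambda_1|\mathbb{B}_R|^{2/N}\mathcal{S}^{(N-2)/2}$ (Hölder and Sobolev), one gets $G(\alpha)<\overline{G}:=\mathcal{S}^{N/2}+\lambda_1|\mathbb{B}_R|^{2/N}\mathcal{S}^{(N-2)/2}$. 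Replacing $G$ by $\overline{G}$ majorizes $f$ by the explicit function $h(\alpha)=\tfrac a\lambda\alpha+b\mu^{-\frac{N-2}{2}}\lambda^{\frac{N-4}{2}}\overline{G}\,\alpha^{-\frac{N-4}{2}}$, whose minimum over $\alpha>0$ computes (the $\lambda$-powers cancel) to precisely the left-hand side of the displayed smallness condition; that condition is thus exactly $\min h<1$, so $f$ dips below $1$ and the intermediate-value theorem produces one root in $(0,\alpha_{*})$ and one in $(\alpha_{*},\lambda_1)$. The main obstacle is the endpoint asymptotics of $G$ — in particular justifying the concentration limit $G\to\mathcal{S}^{N/2}$, and handling the borderline dimensions, where logarithmic corrections appear for $N=4$ and the threshold $\alpha^{*}=\lambda_1/4$ with its blow-up analysis appears for $N=3$ — since the entire scheme rests on knowing these limits precisely and on the continuity of the solution branch.
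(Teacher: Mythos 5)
Your proposal follows the paper's own proof almost line for line: the reduction via Proposition~\ref{prop0001}, the endpoint limits of $G(\alpha)=\int_{\mathbb{B}_R}|\nabla u_\alpha|^2dx$ (the paper's Lemma~\ref{lem0005}, proved there by sandwiching $I_\alpha(u_\alpha)$ between $\frac1N(1-\frac{\alpha}{\lambda_1})\mathcal{S}^{N/2}$ and $\frac1N\mathcal{S}^{N/2}$ and running a dichotomy argument), continuity of $f_{a,b,\lambda,\mu}$ on the solvable range (the paper's Lemma~\ref{lem0003}, which uses compactness below the level $\frac1N\mathcal{S}^{N/2}$ plus uniqueness, not the ``smooth dependence'' you invoke, so no non-degeneracy of the linearization is needed), the intermediate value theorem for part $(1)$, and, for part $(2)$, the same uniform bound $G(\alpha)<\mathcal{C}_2:=\lambda_1|\mathbb{B}_R|^{2/N}\mathcal{S}^{(N-2)/2}+\mathcal{S}^{N/2}$ followed by minimization of the explicit majorant; your observation that the displayed smallness condition is exactly $\min h<1$ agrees with the paper's test point $\frac{\lambda}{\mu}\big(\frac{(N-4)b\mathcal{C}_2}{2a}\big)^{\frac{2}{N-2}}$. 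Cases $(iii)$--$(v)$ and part $(2)$ are correct as you argue (one small point worth recording: $\min h<1$ forces $\frac{a\alpha_{\min}}{\lambda}<1$, hence $\alpha_{\min}<\lambda/a<\lambda_1$, so the test point really lies in the interval where $f$ is defined).

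However, your blanket claim that ``the displayed inequalities are exactly the requirement that the two endpoint limits of $f$ straddle $1$'' fails for the $N=3$ cases $(i)$--$(ii)$, and this is a genuine issue you would have caught by evaluating your own formula for $f$. With $q=2$, $p=2^*=6$, $N=3$ one has $f_{a,b,\lambda,\mu}(\alpha)=\frac{a\alpha}{\lambda}+b\big(\frac{\alpha}{\lambda}\big)^{1/2}\mu^{-1/2}G(\alpha)$, so $G\to\mathcal{S}^{3/2}$ as $\alpha\downarrow\lambda_1/4$ gives
\[
\lim_{\alpha\downarrow\lambda_1/4}f_{a,b,\lambda,\mu}(\alpha)=\frac{a\lambda_1}{4\lambda}+\frac{b\mathcal{S}^{3/2}}{2}\Big(\frac{\lambda_1}{\lambda\mu}\Big)^{1/2},
\]
which is \emph{not} the quantity $\frac a4+\frac{b\mathcal{S}^{3/2}}{2\mu^{1/2}}$ appearing in the statement: the factors $\lambda_1/\lambda$ cannot be dropped. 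Moreover, with the statement's inequality directions, both endpoint limits land on the same side of $1$ (in case $(i)$, $\lambda<a\lambda_1$ gives $f(\lambda_1^-)=\frac{a\lambda_1}{\lambda}>1$, while the displayed condition asks the other endpoint value to exceed $1$ as well), so no sign change is available; the paper's own proof silently uses the opposite directions, i.e.\ the statement and the proof of the theorem are mutually inconsistent for $N=3$. A correct version of $(i)$ is $\lambda<a\lambda_1$ together with $\frac{a\lambda_1}{4\lambda}+\frac{b\mathcal{S}^{3/2}}{2}\big(\frac{\lambda_1}{\lambda\mu}\big)^{1/2}<1$, and symmetrically for $(ii)$. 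By deferring to the displayed condition rather than to the limit you yourself set up, your write-up inherits this error instead of correcting it; everything else in your proposal is sound.
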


\begin{theorem}\label{thm0004}
Let $a,b,\lambda,\mu>0$, $\Omega=\mathbb{B}_R$ and $2<q<p=2^*$.
\begin{enumerate}
\item[$(1)$] $(\mathcal{P}_{a,b,\lambda,\mu})$ has a radial solution if one of the following two cases holds:
\begin{enumerate}
\item[$(i)$] N=3 and $a\bigg(\frac{\lambda_0}{\lambda}\bigg)^{\frac{4}{6-q}}\mu^{\frac{q-2}{6-q}}+bC
\bigg(\frac{\lambda_0}{\lambda}\bigg)^{\frac{2}{6-q}}\mu^{\frac{q-4}{6-q}}<1$, where $C=\frac{2q}{N(q-2)}\mathcal{S}^{\frac N2}$ and $\lambda_0>0$ is a constant given in \cite{CZ12};
\item[$(ii)$] $N=4$ and $\mu>b\mathcal{S}^2$.
\end{enumerate}
\item[$(2)$] If\begin{eqnarray*}
\frac{N-2}{2\mu}\bigg(\frac{2a}{N-4}\bigg)^{\frac{N-4}{N-2}}\bigg(\frac{2q b}{N(q-2)}\bigg)^{\frac{2}{N-2}}\mathcal{S}^{\frac {N}{N-2}}<1
\end{eqnarray*}
then $(\mathcal{P}_{a,b,\lambda,\mu})$ has two radial solutions in the case $N\geq5$.
\end{enumerate}
\end{theorem}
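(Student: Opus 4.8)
The whole statement becomes a one--variable root--counting problem through Proposition~\ref{prop0001}: whenever $(\mathcal{P}_\alpha)$ admits a radial solution $u_\alpha$, the rescaled function $\left(\frac{\lambda}{\alpha\mu}\right)^{\frac{1}{p-q}}u_\alpha$ solves $(\mathcal{P}_{a,b,\lambda,\mu})$ precisely when $f_{a,b,\lambda,\mu}(\alpha)=1$. So the plan is to count the roots of $f_{a,b,\lambda,\mu}(\alpha)=1$ inside the set of $\alpha$ for which $(\mathcal{P}_\alpha)$ is solvable. Setting $p=2^*$ exposes the mechanism: with $s_1:=\frac{p-2}{p-q}$ and $s_2:=\frac{p-4}{p-q}$ one has $s_1>0$ always, whereas $s_2>0$ for $N=3$, $s_2=0$ for $N=4$, and $s_2<0$ for $N\geq5$. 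Writing $D(\alpha):=\int_{\mathbb{B}_R}|\nabla u_\alpha|^2dx$ and $c:=\mu^{\frac{q-2}{p-2}}/\lambda$, the function reads $f_{a,b,\lambda,\mu}(\alpha)=a(c\alpha)^{s_1}+b\mu^{\frac{2}{2-p}}(c\alpha)^{s_2}D(\alpha)$, so the sign of $s_2$ (hence the dimension) completely controls whether $f$ is coercive at one or at both ends of its domain.

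The analytic input is the subcritically perturbed Brezis--Nirenberg theory for $(\mathcal{P}_\alpha)$: a radial mountain--pass solution $u_\alpha$ with energy $E_\alpha(u_\alpha)<\frac1N\mathcal{S}^{N/2}$ exists for every $\alpha>0$ when $N\geq4$, and only for $\alpha>\lambda_0$ when $N=3$, where $\lambda_0$ is the threshold of \cite{CZ12}. The decisive quantitative consequence I would extract is a \emph{uniform} bound on $D(\alpha)$. Combining the Nehari identity $D(\alpha)=\alpha\int u_\alpha^q+\int u_\alpha^{2^*}$ with the energy identity $E_\alpha(u_\alpha)=\alpha\frac{q-2}{2q}\int u_\alpha^q+\frac1N\int u_\alpha^{2^*}$ yields $D(\alpha)=\frac{2q}{q-2}E_\alpha(u_\alpha)-\big(\frac{2q}{N(q-2)}-1\big)\int u_\alpha^{2^*}$; since $q<2^*$ the bracket is positive, so $D(\alpha)<\frac{2q}{q-2}\cdot\frac1N\mathcal{S}^{N/2}=\frac{2q}{N(q-2)}\mathcal{S}^{N/2}$, exactly the constant $C$ appearing in the statement. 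I would also record the asymptotics $D(\alpha)\to\mathcal{S}^{N/2}$ as $\alpha\to0^+$ (for $N\geq4$) and the continuity of $\alpha\mapsto D(\alpha)$, both needed for the intermediate value theorem.

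Part $(1)$ is an intermediate value argument at the left endpoint. For $N=4$ one has $s_2=0$, so $f_{a,b,\lambda,\mu}(\alpha)=a(c\alpha)^{s_1}+b\mu^{-1}D(\alpha)$; letting $\alpha\to0^+$ gives $f\to b\mu^{-1}\mathcal{S}^2$, which is $<1$ exactly under $\mu>b\mathcal{S}^2$, while $f\to+\infty$ as $\alpha\to+\infty$, so a root exists and yields one radial solution. For $N=3$ one has $s_1,s_2>0$ on the admissible interval $(\lambda_0,\infty)$; using $D\leq C$ shows that the displayed inequality in $(i)$ is nothing but $f_{a,b,\lambda,\mu}(\lambda_0)<1$, and since $f\to+\infty$ as $\alpha\to+\infty$ a root in $(\lambda_0,\infty)$ follows. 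Part $(2)$ rests on coercivity at both ends: for $N\geq5$, $s_2<0$ forces $(c\alpha)^{s_2}\to+\infty$ as $\alpha\to0^+$ (with $D$ bounded below) and $a(c\alpha)^{s_1}\to+\infty$ as $\alpha\to+\infty$, so $f\to+\infty$ at both ends and has an interior minimum. Replacing $D(\alpha)$ by its uniform bound $C$ and minimizing the explicit model $a(c\alpha)^{s_1}+b\mu^{\frac{2}{2-p}}C(c\alpha)^{s_2}$ in closed form (the minimizer is where the two power terms balance), the minimal value is exactly the left--hand side of the displayed inequality in part~$(2)$; hence that inequality forces $\min f<1$, so $f=1$ has two roots, giving two radial solutions.

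The hard part is the uniform energy estimate $D(\alpha)<C$ together with sub-threshold solvability of $(\mathcal{P}_\alpha)$, which is genuinely dimension--sensitive. It is clean for $N\geq5$; for $N=4$ the existence is fine for all $\alpha>0$ but the sign condition $\mu>b\mathcal{S}^2$ is needed to push the limit of $f$ below $1$; and for $N=3$ one must work above the delicate threshold $\lambda_0$ of \cite{CZ12}, which is exactly why part~$(2)$ contains no $N=3$ case. The remaining points---continuity of $\alpha\mapsto D(\alpha)$, checking that each root lies in the solvable range, and confirming that the explicit minimization reproduces the stated constants $\frac{N-2}{2\mu}$, $\big(\frac{2a}{N-4}\big)^{\frac{N-4}{N-2}}$ and $\big(\frac{2qb}{N(q-2)}\big)^{\frac{2}{N-2}}\mathcal{S}^{\frac{N}{N-2}}$---are routine once the above are in place.
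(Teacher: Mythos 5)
Your proposal is correct and follows essentially the same route as the paper: reduce to root-counting for $f_{a,b,\lambda,\mu}(\alpha)=1$ via Proposition~\ref{prop0001}, use the known solvability/radial-symmetry range of $(\mathcal{P}_\alpha)$ (all $\alpha>0$ for $N\geq4$, $\alpha>\lambda_0$ for $N=3$), compute the endpoint limits of $f$ (including $D(\alpha)\to\mathcal{S}^{N/2}$ as $\alpha\downarrow0$ for $N\geq4$ and the uniform bound $D(\alpha)<\frac{2q}{N(q-2)}\mathcal{S}^{N/2}$), and apply the intermediate value theorem, with the two-root case obtained by minimizing the explicit majorant of $f$. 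Your Nehari-plus-energy derivation of the bound $D<C$ and the closed-form minimization merely spell out what the paper dismisses as ``similar arguments'' to the proof of Theorem~\ref{thm0001}, and they reproduce the stated constants exactly.
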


\begin{remark}{\em
\begin{enumerate}
\item[$(a)$] Some existence results of Theorems~\ref{thm0001}--\ref{thm0004} in the cases $N=3,4$ have been obtained in the literatures, see for example \cite{F13,HLW15,N14,N141} and the references therein.  Comparing with these papers, the novelty of Theorems~\ref{thm0001}--\ref{thm0004} in the cases $N=3,4$ is that, we can precisely give the range of the parameters $a,b,\lambda,\mu$ and the solutions founded in Theorems~\ref{thm0001}--\ref{thm0004} have precise expressions on the parameters $a,b,\lambda,\mu$ due to Proposition~\ref{prop0001}.
\item[$(b)$] A new and interesting phenomenon revealed by Theorems~\ref{thm0001} and \ref{thm0003} is that the Kirchhoff type problem~$(\mathcal{P}_{a,b,\lambda,\mu})$ with $q=2$ still has solutions if $\lambda>a\lambda_1$ and some further conditions hold, which is quite different from the related local problem~$(\mathcal{P}_{1,0,\lambda,1})$ with $q=2$, for example the well known Brez\'is--Nirenberg problem.
\item[$(c)$] In \cite{N141}, Neimen obtained the following results by using the variational method:
\vspace{6pt}

{\bf Theorem A}{\em \quad
Let $N=4$ and $2<q<4$.  If $b\mathcal{S}^2<\mu<2b\mathcal{S}^2$ and $\Omega\subset \bbr^4$ is strictly star-sharped, then Problem~$(\mathcal{P}_{a,b,\lambda,\mu})$ has a solution under one of the following three cases:
\begin{itemize}
  \item [$(C1)$] $a>0$, $\lambda>0$ is small enough,
  \item [$(C2)$] $\lambda>0$, $a>0$ is large enough,
  \item [$(C3)$] $a>0$, $\lambda>0$ and $\frac{\mu}{b}>\mathcal{S}^2$ is
  sufficiently close to $\mathcal{S}^2$.
\end{itemize}}

Neimen also asked  whether that the conditions that $\mu<2b\mathcal{S}^2$, $\Omega\subset \bbr^4$ is strictly star-sharped and $(C1)$--$(C3)$ are necessary in Theorem~A.  In our paper \cite{HLW15}, we give a partial answer to Neimen's open question, where, by using the variational method, it has been proved that the conditions that $\mu<2b\mathcal{S}^2$ and $\Omega\subset \bbr^4$ is strictly star-sharped are not necessary in Theorem~A if the parameter $b>0$ is sufficiently small.  Now, by Theorem~\ref{thm0004}, we can give another partial answer to Neimen's open question, that is, in the case $\Omega=\mathbb{B}_R$ the conditions $\mu<2b\mathcal{S}^2$ and $(C1)$--$(C3)$ are not necessary in Theorem~A.
\item[$(d)$] To the best of our knowledge, Theorems~\ref{thm0001}--\ref{thm0004} in the case $N\geq5$ are totally new.
\end{enumerate}}
\end{remark}

\begin{remark}\label{rmk0003}{\em
\begin{enumerate}
\item[$(a)$] The proofs of Theorems~\ref{thm0001}--\ref{thm0004} depend heavily on the continuity of the function $f_{a,b,\lambda,\mu}(\alpha)$ given in Proposition~\ref{prop0001} on some intervals of $\bbr$, which is ensured by the assumption $\Omega=\mathbb{B}_R$.  For a general bounded domain $\Omega$, if we can find a continuous curve $\mathcal{L}$ in the set $\mathbb{S}$ on some intervals of $\bbr$, then $f_{a,b,\lambda,\mu}(\alpha)$ is still continuous on these intervals and the proofs of Theorems~\ref{thm0001}--\ref{thm0004} do work, where $\mathbb{S}=\{(u_\alpha,\alpha)\mid u_\alpha\text{ is a solution of }(\mathcal{P}_\alpha)\}$.  It follows that the answer of Neimen's open question may be positive since it can be solved by finding a continuous curve $\mathcal{L}$ in the set $\mathbb{S}$ in the case $N=4$ and $2<q<p=2^*=4$.  However, we can only obtain such continuous curve in $\mathbb{S}$ in the case $N\geq3$ and $2=q<p<2^*$ by the Rabinowitz global bifurcation theorem (see more details in Appendix).
\item[$(b)$] The conditions of Theorems~\ref{thm0003} and \ref{thm0004} in the case $N=4$ seem to be almost ``optimal''.  Indeed, in our paper \cite{HLW15}, we have shown that $a\lambda_1-\lambda\geq0$ and $b\mathcal{S}^2-\mu\geq0$ can not hold simultaneously if $(\mathcal{P}_{a,b,\lambda,\mu})$ has a solution in the case $q=2$ and  $(\mathcal{P}_{a,b,\lambda,\mu})$ has no solution in the case $b\mathcal{S}^2-\mu>0$ if $a$ is sufficiently large or $\lambda$ is sufficiently small in the case $2<q$.  However, we do not know whether the conditions of Theorems~\ref{thm0003} and \ref{thm0004} in the cases $N=3$ and $N\geq5$ are almost ``optimal''.
\item[$(c)$] Theorems~\ref{thm0001}--\ref{thm0004} give no information of $(\mathcal{P}_{a,b,\lambda,\mu})$ for $\lambda=a\lambda_1$ in the case $q=2$ and $b\mathcal{S}^2=\mu$ in all cases.  On the other hand, due to the above $(b)$, $(\mathcal{P}_{a,b,\lambda,\mu})$ has no solution even in a general bounded domain in the case $\lambda=a\lambda_1$, $q=2$ and $b\mathcal{S}^2=\mu$.
\item[$(d)$] Due to the Kirchhoff type nonlocal term $-b(\int_{\Omega}|\nabla u|^2dx)\Delta u$, we can see from Theorems~\ref{thm0001}--\ref{thm0004} that the Kirchhoff type  problem~$(\mathcal{P}_{a,b,\lambda,\mu})$ has two solutions in some cases even $\Omega=\mathbb{B}_R$.  It seems that the branch of solutions to the Kirchhoff type problem~$(\mathcal{P}_{a,b,\lambda,\mu})$ is more complex than the related local problem~$(\mathcal{P}_{1,0,\lambda,\mu})$.  On the other hand, some concentration behaviors of the solutions to $(\mathcal{P}_{a,b,\lambda,\mu})$ can be observed by study the properties of the function $\alpha(a,b,\lambda,\mu)$, where $\alpha(a,b,\lambda,\mu)$ is given by Proposition~\ref{prop0001}.  However, we will not go further in this direction in the current paper.
\end{enumerate}}
\end{remark}

Through this paper, $o_n(1)$ will always denote the quantities tending towards zero as $n\to\infty$.%

\section{Setting of the problem}
In this setion, we first give the proof of Proposition~\ref{prop0001}.\vspace{6pt}

\noindent\textbf{Proof of Proposition~\ref{prop0001}.}\quad
Let $\psi=t u_\alpha$.  Since $u_{\alpha}$ is a solution of $(\mathcal{P}_{\alpha})$, it follows that
\begin{eqnarray*}
-\Delta\psi=t(\alpha u_\alpha^{q-1}+u_\alpha^{p-1})=\alpha t^{2-q}\psi^{q-1}+t^{2-p}\psi^{p-1}.
\end{eqnarray*}
Set $t_\mu=\mu^{\frac{1}{2-p}}$.  Then $\psi_{\alpha,\mu}=\mu^{\frac{1}{2-p}} u_\alpha$ is a solution of the following equation:
$$
\left\{\aligned -\Delta u&= \alpha\mu^{\frac{q-2}{p-2}} u^{q-1}+\mu u^{p-1}, &\quad \text{in }\Omega, \\
u&>0,&\quad\text{in }\Omega,\\
u&=0,&\quad\text{on }\partial\Omega.
\endaligned
\right.\eqno{(\mathcal{P}_{\alpha,\mu})}
$$
Let $\varphi=s \psi_{\alpha,\mu}$, then we have
\begin{eqnarray*}
-\bigg(a+b\int_{\Omega}|\nabla \varphi|^2dx\bigg)\Delta \varphi&=&s\bigg(a+s^2b\int_{\Omega}|\nabla \psi_{\alpha,\mu}|^2dx\bigg)(\alpha\mu^{\frac{q-2}{p-2}} \psi_{\alpha,\mu}^{q-1}+\mu \psi_{\alpha,\mu}^{p-1})\\
&=&\bigg(a+s^2b\int_{\Omega}|\nabla \psi_{\alpha,\mu}|^2dx\bigg)(s^{2-q}\alpha\mu^{\frac{q-2}{p-2}}\varphi^{q-1}+s^{2-p}\mu\varphi^{p-1}).
\end{eqnarray*}
Therefore $\varphi=s \psi_{\alpha,\mu}$ is a solution of $(\mathcal{P}_{a,b,\lambda,\mu})$ if and only if $(s,\alpha)$ satisfies the following system:
\begin{eqnarray*}
\left\{\aligned \bigg(a+s^2b\int_{\Omega}|\nabla \psi_{\alpha,\mu}|^2dx\bigg)s^{2-q}\alpha\mu^{\frac{q-2}{p-2}}&=\lambda,\\
\bigg(a+s^2b\int_{\Omega}|\nabla \psi_{\alpha,\mu}|^2dx\bigg)s^{2-p}&=1,
\endaligned
\right.
\end{eqnarray*}
which is equivalent to that $(s,\alpha)$ satisfies the following system:
$$
\left\{\aligned s^{p-q}\alpha\mu^{\frac{q-2}{p-2}}&=\lambda,\\
a\bigg(\frac{\alpha\mu^{\frac{q-2}{p-2}}}{\lambda}\bigg)^{\frac{p-2}{p-q}}
+\bigg(\frac{\alpha\mu^{\frac{q-2}{p-2}}}{\lambda}\bigg)^{\frac{p-4}{p-q}}b\int_{\Omega}|\nabla \psi_{\alpha,\mu}|^2dx&=1.
\endaligned
\right.
$$
So that $\varphi_{a,b,\lambda,\mu}=\bigg(\frac{\lambda}{\alpha\mu^{\frac{q-2}{p-2}}}\bigg)^{\frac{1}{p-q}}\mu^{\frac{1}{2-p}}u_{\alpha}
=\bigg(\frac{\lambda}{\alpha\mu}\bigg)^{\frac{1}{p-q}}u_{\alpha}$ is a solution of $(\mathcal{P}_{a,b,\lambda,\mu})$ if and only if $f_{a,b,\lambda,\mu}(\alpha)=1$.
\qquad\raisebox{-0.5mm}{%
\rule{1.5mm}{4mm}}\vspace{10pt}

Next we will consider the continuity of $f_{a,b,\lambda,\mu}(\alpha)$ as a function of $\alpha$ on some subset of $\mathbb{R}$.  In order to do this, let us respectively denote the corresponding functional and the Nehari manifold of $(\mathcal{P}_\alpha)$ in $H_0^1(\Omega)$ by $I_\alpha(u)$ and $\mathcal{N}_{\alpha}$, that is,
\begin{eqnarray*}\label{eq1000}
I_\alpha(u):=\frac12\|\nabla u\|_{L^2(\Omega)}^2-\frac{\alpha}{q}\|u\|_{L^q(\Omega)}^q-\frac{1}{p}\|u\|_{L^{p}(\Omega)}^{p}
\end{eqnarray*}
and
\begin{eqnarray*}
\mathcal{N}_{\alpha}:=\{u\in H^1_0(\Omega)\mid I_\alpha'(u)u=0\}.
\end{eqnarray*}
Define
\begin{eqnarray*}
\mathcal{D}:=\{\alpha\mid(\mathcal{P}_\alpha)\text{ has a unique solution }u_\alpha\text{ with }I_\alpha(u_\alpha)=m_\alpha\},
\end{eqnarray*}
where $m_\alpha:=\inf_{u\in\mathcal{N}_{\alpha}}I_{\alpha}(u)$.  Then we have the following.
\begin{lemma}\label{lem0002}
If $p<2^*$  then the function $f_{a,b,\lambda,\mu}(\alpha)$ is continuous on $\mathcal{D}$.
\end{lemma}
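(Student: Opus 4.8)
The two prefactors $a\bigl(\alpha\mu^{\frac{q-2}{p-2}}/\lambda\bigr)^{\frac{p-2}{p-q}}$ and $b\bigl(\alpha\mu^{\frac{q-2}{p-2}}/\lambda\bigr)^{\frac{p-4}{p-q}}\mu^{\frac{2}{2-p}}$ occurring in $f_{a,b,\lambda,\mu}(\alpha)$ are explicit power functions of $\alpha$, hence continuous wherever $\alpha$ stays away from the origin. So the plan is to reduce the statement to proving that the map $\alpha\mapsto\|\nabla u_\alpha\|_{L^2(\Omega)}^2$ is continuous on $\mathcal{D}$, where $u_\alpha$ denotes the unique ground state supplied by the definition of $\mathcal{D}$.

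Fix $\alpha_0\in\mathcal{D}$ and an arbitrary sequence $\alpha_n\to\alpha_0$ with $\alpha_n\in\mathcal{D}$; I want to show $\|\nabla u_{\alpha_n}\|_{L^2(\Omega)}^2\to\|\nabla u_{\alpha_0}\|_{L^2(\Omega)}^2$. First I would establish that $\{u_{\alpha_n}\}$ is bounded in $H_0^1(\Omega)$: projecting the fixed function $u_{\alpha_0}$ onto $\mathcal{N}_{\alpha_n}$ yields a uniform upper bound $m_{\alpha_n}\leq C$, and combining $I_{\alpha_n}(u_{\alpha_n})=m_{\alpha_n}$ with the Nehari relation $I_{\alpha_n}'(u_{\alpha_n})u_{\alpha_n}=0$ controls $\|\nabla u_{\alpha_n}\|_{L^2(\Omega)}^2$ (for $q>2$ through the combination $I_{\alpha_n}-\frac1qI_{\alpha_n}'$, and for $q=2$ through Poincar\'e's inequality, noting that $q=2$ forces $\mathcal{D}\subset(0,\lambda_1)$ by testing $(\mathcal{P}_\alpha)$ against the first eigenfunction). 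Since $q<p<2^*$, the embeddings $H_0^1(\Omega)\hookrightarrow L^q(\Omega)$ and $H_0^1(\Omega)\hookrightarrow L^p(\Omega)$ are compact; thus, along a subsequence, $u_{\alpha_n}\rightharpoonup u^*$ weakly in $H_0^1(\Omega)$ and $u_{\alpha_n}\to u^*$ strongly in $L^q(\Omega)$, in $L^p(\Omega)$ and a.e. in $\Omega$, and passing to the limit in the weak formulation of $(\mathcal{P}_{\alpha_n})$ shows that $u^*\geq0$ solves $(\mathcal{P}_{\alpha_0})$.

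The crux is to identify $u^*$ with the ground state $u_{\alpha_0}$. I would first rule out $u^*\equiv0$: the Nehari identity together with the Sobolev inequality shows that $\mathcal{N}_{\alpha_n}$ stays uniformly bounded away from $0$, i.e.\ $\|\nabla u_{\alpha_n}\|_{L^2(\Omega)}\geq c>0$; were $u^*\equiv0$, the strong $L^q,L^p$ convergence would force $\|\nabla u_{\alpha_n}\|_{L^2(\Omega)}^2=\alpha_n\|u_{\alpha_n}\|_{L^q(\Omega)}^q+\|u_{\alpha_n}\|_{L^p(\Omega)}^p\to0$, a contradiction. Next I would prove $m_{\alpha_n}\to m_{\alpha_0}$: the bound $\limsup_n m_{\alpha_n}\leq m_{\alpha_0}$ is the upper estimate already used (the projection scalar of $u_{\alpha_0}$ onto $\mathcal{N}_{\alpha_n}$ tends to $1$ as $\alpha_n\to\alpha_0$), while $\liminf_n m_{\alpha_n}\geq I_{\alpha_0}(u^*)\geq m_{\alpha_0}$ follows from weak lower semicontinuity of the Dirichlet integral, strong convergence of the lower-order terms, and $u^*\in\mathcal{N}_{\alpha_0}$. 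Hence $I_{\alpha_0}(u^*)=m_{\alpha_0}$, so $u^*$ is a ground state of $(\mathcal{P}_{\alpha_0})$; by the uniqueness built into $\mathcal{D}$ we get $u^*=u_{\alpha_0}$. Since every subsequence admits a further subsequence converging to this same limit, the full sequence satisfies $u_{\alpha_n}\to u_{\alpha_0}$ in $L^q(\Omega)$ and $L^p(\Omega)$.

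The conclusion is then immediate from the Nehari identity, since
\begin{eqnarray*}
\|\nabla u_{\alpha_n}\|_{L^2(\Omega)}^2=\alpha_n\|u_{\alpha_n}\|_{L^q(\Omega)}^q+\|u_{\alpha_n}\|_{L^p(\Omega)}^p
\to\alpha_0\|u_{\alpha_0}\|_{L^q(\Omega)}^q+\|u_{\alpha_0}\|_{L^p(\Omega)}^p=\|\nabla u_{\alpha_0}\|_{L^2(\Omega)}^2,
\end{eqnarray*}
which gives the continuity of $\alpha\mapsto\|\nabla u_\alpha\|_{L^2(\Omega)}^2$ and hence of $f_{a,b,\lambda,\mu}$ on $\mathcal{D}$. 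The main obstacle is the identification step, namely the nontriviality of the weak limit and the convergence $m_{\alpha_n}\to m_{\alpha_0}$: the hypothesis $p<2^*$ is exactly what supplies the compactness (strong $L^q,L^p$ convergence) making these arguments work, and this is where the restriction $p<2^*$ is genuinely used, the critical case $p=2^*$ being precisely the one in which this compactness is lost.
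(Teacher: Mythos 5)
Your proof is correct and follows essentially the same route as the paper's: reduce to continuity of $\alpha\mapsto\|\nabla u_\alpha\|_{L^2(\Omega)}^2$, take $\alpha_n\to\alpha_0$ in $\mathcal{D}$, extract a bounded and hence weakly convergent sequence, use subcritical compactness to show the weak limit is a ground state of $(\mathcal{P}_{\alpha_0})$ at level $m_{\alpha_0}$, and invoke the uniqueness built into $\mathcal{D}$ together with the subsequence principle. The only differences are organizational: you prove the level convergence $m_{\alpha_n}\to m_{\alpha_0}$ directly via two-sided Nehari projection estimates where the paper cites \cite[Lemma~5.1]{HWW14}, and you conclude through the Nehari identity rather than through strong $H_0^1(\Omega)$ convergence of $u_{\alpha_n}$.
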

\begin{proof}
Let $\alpha_0\in\mathcal{D}$ and $\{\alpha_n\}\subset \mathcal{D}$ satisfying $\alpha_n=\alpha_0+o_n(1)$.  By a similar argument used in the proof of \cite[Lemma~5.1]{HWW14}, we can see that $m_{\alpha_n}=m_{\alpha_0}+o_n(1)$.  It follows that $\{u_{\alpha_n}\}$ is bounded in $H_0^1(\Omega)$.  Without loss of generality, we may assume that $u_{\alpha_n}\rightharpoonup u_{\alpha_0}^*$ weakly in $H_0^1(\Omega)$ for some $u_{\alpha_0}^*\in H_0^1(\Omega)$ as $n\to\infty$.  It is easy to show that $I_{\alpha_0}'(u_{\alpha_0}^*)=0$ in $H^{-1}(\Omega)$, where $H^{-1}(\Omega)$ is the dual space of $H_0^1(\Omega)$.  This, together with the Sobolev embedding theorem and the fact that $I_{\alpha_n}'(u_{\alpha_n})=0$ in $H^{-1}(\Omega)$, implies that $u_{\alpha_n}=u_{\alpha_0}^*+o_n(1)$ strongly in $H_0^1(\Omega)$.  Since $u_{\alpha_n}>0$ in $\Omega$, by the strong maximum principle, we also have $u_{\alpha_0}^*>0$ in $\Omega$.  Thus, $u_{\alpha_0}^*$ is a solution of $(\mathcal{P}_{\alpha_0})$ with $I_{\alpha_0}(u_{\alpha_0}^*)=m_{\alpha_0}$.  Since $\alpha_0\in\mathcal{D}$, we must have $u_{\alpha_0}^*=u_{\alpha_0}$ in $H_0^1(\Omega)$.    It follows from the arbitrariness of $\alpha_0\in\mathcal{D}$ that the function $\alpha\mapsto\int_{\Omega}|\nabla u_{\alpha}|^2dx$ is continuous on $\mathcal{D}$, which deduces that $f_{a,b,\lambda,\mu}(\alpha)$ is continuous on $\mathcal{D}$.
\end{proof}

\begin{lemma}\label{lem0003}
If $p=2^*$ then the function $f_{a,b,\lambda,\mu}(\alpha)$ is continuous on $\mathcal{D}\cap\mathcal{F}$, where $\mathcal{F}=\{\alpha\mid m_\alpha<\frac1N S^{\frac N2}\}$.
\end{lemma}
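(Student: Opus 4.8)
The plan is to follow the scheme of Lemma~\ref{lem0002}, the only genuinely new difficulty being the loss of compactness of the embedding $H_0^1(\Omega)\hookrightarrow L^{2^*}(\Omega)$ at the critical exponent $p=2^*$; the restriction to the set $\mathcal{F}$ is precisely what restores enough compactness. Fix $\alpha_0\in\mathcal{D}\cap\mathcal{F}$ and take any $\{\alpha_n\}\subset\mathcal{D}\cap\mathcal{F}$ with $\alpha_n=\alpha_0+o_n(1)$. First I would establish, by a comparison argument as in \cite[Lemma~5.1]{HWW14}, that $m_{\alpha_n}=m_{\alpha_0}+o_n(1)$. Since each $u_{\alpha_n}\in\mathcal{N}_{\alpha_n}$, the Nehari identity $\|\nabla u_{\alpha_n}\|_{L^2(\Omega)}^2=\alpha_n\|u_{\alpha_n}\|_{L^q(\Omega)}^q+\|u_{\alpha_n}\|_{L^{2^*}(\Omega)}^{2^*}$ together with $I_{\alpha_n}(u_{\alpha_n})=m_{\alpha_n}$ yields that $\{u_{\alpha_n}\}$ is bounded in $H_0^1(\Omega)$. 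Hence, up to a subsequence, $u_{\alpha_n}\rightharpoonup u^*$ weakly in $H_0^1(\Omega)$, $u_{\alpha_n}\to u^*$ in $L^q(\Omega)$ (here $q<2^*$ gives a compact embedding) and a.e.\ in $\Omega$, and a standard weak-continuity argument gives $I_{\alpha_0}'(u^*)=0$ in $H^{-1}(\Omega)$.

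The heart of the proof is to upgrade this to strong convergence. Setting $w_n=u_{\alpha_n}-u^*$, so that $w_n\rightharpoonup0$ in $H_0^1(\Omega)$, I would apply the Brezis--Lieb lemma and the compactness in $L^q(\Omega)$, and subtract the Nehari identity for $u^*$ (with $\alpha_0$) from that for $u_{\alpha_n}$ (with $\alpha_n$), to obtain
\[
\|\nabla w_n\|_{L^2(\Omega)}^2=\|w_n\|_{L^{2^*}(\Omega)}^{2^*}+o_n(1).
\]
Writing $\ell:=\lim_n\|\nabla w_n\|_{L^2(\Omega)}^2\geq0$ along a further subsequence, the Sobolev inequality $\|\nabla w_n\|_{L^2(\Omega)}^2\geq S\|w_n\|_{L^{2^*}(\Omega)}^2$ forces $\ell\geq S\,\ell^{2/2^*}$, so that either $\ell=0$ or $\ell\geq S^{N/2}$. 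A second application of Brezis--Lieb, now to the energy, together with $\tfrac12-\tfrac1{2^*}=\tfrac1N$, gives the splitting
\[
m_{\alpha_0}=I_{\alpha_0}(u^*)+\tfrac1N\ell.
\]

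It then remains to exclude $\ell\geq S^{N/2}$. If $u^*\equiv0$ then $I_{\alpha_0}(u^*)=0$ and the splitting would give $m_{\alpha_0}=\tfrac1N\ell\geq\tfrac1N S^{N/2}$, contradicting $\alpha_0\in\mathcal{F}$; hence $u^*\not\equiv0$. Then $I_{\alpha_0}'(u^*)=0$ gives $u^*\in\mathcal{N}_{\alpha_0}$, so $I_{\alpha_0}(u^*)\geq m_{\alpha_0}$, and the splitting yields $\tfrac1N\ell=m_{\alpha_0}-I_{\alpha_0}(u^*)\leq0$. Thus $\ell=0$, i.e.\ $u_{\alpha_n}\to u^*$ strongly in $H_0^1(\Omega)$. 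By the strong maximum principle $u^*>0$ in $\Omega$, so $u^*$ is a solution of $(\mathcal{P}_{\alpha_0})$ with $I_{\alpha_0}(u^*)=m_{\alpha_0}$; since $\alpha_0\in\mathcal{D}$, uniqueness forces $u^*=u_{\alpha_0}$. Consequently $\int_\Omega|\nabla u_{\alpha_n}|^2dx\to\int_\Omega|\nabla u_{\alpha_0}|^2dx$, the map $\alpha\mapsto\int_\Omega|\nabla u_\alpha|^2dx$ is continuous on $\mathcal{D}\cap\mathcal{F}$, and therefore so is $f_{a,b,\lambda,\mu}(\alpha)$.

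The main obstacle, as anticipated, is this critical step: because $H_0^1(\Omega)\hookrightarrow L^{2^*}(\Omega)$ is noncompact, the weak limit $u^*$ might a priori carry strictly less mass than the sequence, with the deficit $\ell$ escaping in the form of a Sobolev bubble of energy at least $\tfrac1N S^{N/2}$. The strict inequality $m_{\alpha_0}<\tfrac1N S^{N/2}$ encoded in $\mathcal{F}$ is exactly the energy budget that rules out such bubbling, and keeping the bookkeeping of the two Brezis--Lieb decompositions consistent is the one delicate point. A secondary technical issue is justifying $m_{\alpha_n}=m_{\alpha_0}+o_n(1)$ in the critical regime, which I expect again to follow from comparing minimizing families on $\mathcal{N}_{\alpha_n}$ and $\mathcal{N}_{\alpha_0}$.
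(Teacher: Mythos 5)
Your proposal is correct and takes essentially the same route as the paper: convergence of the ground-state levels $m_{\alpha_n}\to m_{\alpha_0}$ via the argument of \cite[Lemma~5.1]{HWW14}, boundedness and weak convergence of $u_{\alpha_n}$, strong convergence via the Br\'ezis--Lieb lemma and the Sobolev inequality using the threshold $m_{\alpha_0}<\frac1N\mathcal{S}^{\frac N2}$ built into $\mathcal{F}$, and finally identification $u^*=u_{\alpha_0}$ from the uniqueness encoded in $\mathcal{D}$. The paper compresses the bubbling-exclusion step into the phrase ``in a standard way''; your write-up simply makes that dichotomy-and-energy-splitting argument explicit.
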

\begin{proof}
Let $\alpha_0\in\mathcal{D}\cap\mathcal{F}$ and $\{\alpha_n\}\subset\mathcal{D}\cap\mathcal{F}$ satisfying $\alpha_n=\alpha_0+o_n(1)$.  Since $I_{\alpha_n}(u_{\alpha_n})<\frac1N\mathcal{S}^{\frac N2}$ and $u_{\alpha_n}$ is a solution of $(\mathcal{P}_{\alpha_n})$, by using standard arguments, we can show that $\{u_{\alpha_n}\}$ is bounded in $H_0^1(\Omega)$.  Going if necessary to a subsequence, we can assume that $u_{\alpha_n}\rightharpoonup u_{\alpha_0}^*$ weakly in $H_0^1(\Omega)$ for some $u_{\alpha_0}^*\in H_0^1(\Omega)$ as $n\to\infty$.  Clearly, $I_{\alpha_0}'(u_{\alpha_0}^*)=0$ in $H^{-1}(\Omega)$.  The strong maximum principle and the fact that $u_{\alpha_n}>0$ in $\Omega$ ensure that $u_{\alpha_0}^*>0$ in $\Omega$.  On the other hand, a similar argument in the proof of \cite[Lemma~5.1]{HWW14} also gives that $m_{\alpha_n}=m_{\alpha_0}+o_n(1)$.  Note that $\alpha_n=\alpha_0+o_n(1)$ and $m_{\alpha_0}<\frac1N\mathcal{S}^{\frac N2}$, we can use the Br\'ezis-Lieb Lemma and the Sobolev embedding theorem in a standard way to show that $u_{\alpha_n}=u_{\alpha_0}^*+o_n(1)$ strongly in $H_0^1(\Omega)$.  Therefore, $u_{\alpha_0}^*$ is a solution of $(\mathcal{P}_{\alpha_0})$ with $I_{\alpha_0}(u_{\alpha_0}^*)=m_{\alpha_0}$.  Since $\alpha_0\in\mathcal{D}$, we must have $u_{\alpha_0}^*=u_{\alpha_0}$ in $H^1_0(\Omega)$.  Thus, we have proved that the function $\alpha\mapsto\int_{\Omega}|\nabla u_{\alpha}|^2dx$ is continuous on $\mathcal{D}\cap\mathcal{F}$ and so that $f_{a,b,\lambda,\mu}(\alpha)$ is continuous on $\mathcal{D}\cap\mathcal{F}$.
\end{proof}

\section{The existence of solutions}
In this section, with the help of Proposition~\ref{prop0001}, we will give the proofs of our results on the existence of solutions to $(\mathcal{P}_{a,b,\lambda,\mu})$ in the case of $\Omega=\mathbb{B}_R$.

\subsection{The case of $2=q<p<2^*$}
It is well-known that $(0, \lambda_1)\subset\mathcal{D}$ in this case and $u_\alpha$ is radial (cf. \cite{K89}).  In order to apply Proposition~\ref{prop0001}, we need the following lemma.
\begin{lemma}\label{lem0004}
It holds that $\lim_{\alpha\uparrow\lambda_1}\int_{\mathbb{B}_R}|\nabla u_{\alpha}|^2dx=0$ and $\lim_{\alpha\downarrow0}\int_{\mathbb{B}_R}|\nabla u_{\alpha}|^2dx=\frac{2p}{p-2}m_0$.
\end{lemma}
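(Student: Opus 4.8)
The plan is to reduce everything to two elementary identities for the ground state $u_\alpha$ and then handle the two limits by quite different mechanisms. Testing the equation $-\Delta u_\alpha=\alpha u_\alpha+u_\alpha^{p-1}$ against $u_\alpha$ (here $q=2$) gives the Nehari identity $\|\nabla u_\alpha\|_{L^2(\mathbb{B}_R)}^2=\alpha\|u_\alpha\|_{L^2(\mathbb{B}_R)}^2+\|u_\alpha\|_{L^p(\mathbb{B}_R)}^p$, and substituting this into $I_\alpha(u_\alpha)=m_\alpha$ yields the energy formula $m_\alpha=\frac{p-2}{2p}\|u_\alpha\|_{L^p(\mathbb{B}_R)}^p$. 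These two relations, valid for every $\alpha\in(0,\lambda_1)$ (where $u_\alpha$ is the unique positive, radial, ground state by $(0,\lambda_1)\subset\mathcal D$), are the backbone of the whole argument.

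For the limit $\alpha\uparrow\lambda_1$ the key is a quantitative decay rate for $m_\alpha$. Using the first eigenfunction $\varphi_1$, with $-\Delta\varphi_1=\lambda_1\varphi_1$, as a test path on the Nehari manifold and maximizing $t\mapsto I_\alpha(t\varphi_1)=\frac{t^2}{2}(\lambda_1-\alpha)\|\varphi_1\|_{L^2(\mathbb{B}_R)}^2-\frac{t^p}{p}\|\varphi_1\|_{L^p(\mathbb{B}_R)}^p$, I obtain $m_\alpha\le\frac{p-2}{2p}\big((\lambda_1-\alpha)\|\varphi_1\|_{L^2(\mathbb{B}_R)}^2\big)^{\frac{p}{p-2}}\|\varphi_1\|_{L^p(\mathbb{B}_R)}^{-\frac{2p}{p-2}}$, so that $\|u_\alpha\|_{L^p(\mathbb{B}_R)}^p=\frac{2p}{p-2}m_\alpha=O\big((\lambda_1-\alpha)^{\frac{p}{p-2}}\big)$. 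Next I combine the Nehari identity with the Poincar\'e inequality $\|u_\alpha\|_{L^2(\mathbb{B}_R)}^2\le\lambda_1^{-1}\|\nabla u_\alpha\|_{L^2(\mathbb{B}_R)}^2$ to get $\big(1-\frac{\alpha}{\lambda_1}\big)\|\nabla u_\alpha\|_{L^2(\mathbb{B}_R)}^2\le\|u_\alpha\|_{L^p(\mathbb{B}_R)}^p$, whence $\|\nabla u_\alpha\|_{L^2(\mathbb{B}_R)}^2\le\frac{\lambda_1}{\lambda_1-\alpha}\|u_\alpha\|_{L^p(\mathbb{B}_R)}^p=O\big((\lambda_1-\alpha)^{\frac{2}{p-2}}\big)\to0$. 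I expect this to be the main obstacle: since $\lambda_1\notin\mathcal D$ and $u_\alpha$ degenerates there, the continuity of Lemma~\ref{lem0002} is unavailable, so an explicit rate is genuinely required, and the balance between the blow-up of $\lambda_1/(\lambda_1-\alpha)$ and the decay of $\|u_\alpha\|_{L^p(\mathbb{B}_R)}^p$ closes only because $p>2$ makes the exponent $\frac{2}{p-2}$ positive.

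For the limit $\alpha\downarrow0$ I would instead invoke continuity. The problem $(\mathcal P_0)$ is $-\Delta u=u^{p-1}$, which on the ball with $2<p<2^*$ admits a unique positive (radial) solution $u_0$ equal to the ground state, so $0\in\mathcal D$ and Lemma~\ref{lem0002} gives continuity of $\alpha\mapsto\int_{\mathbb{B}_R}|\nabla u_\alpha|^2dx$ at $0$; hence $\lim_{\alpha\downarrow0}\int_{\mathbb{B}_R}|\nabla u_\alpha|^2dx=\int_{\mathbb{B}_R}|\nabla u_0|^2dx$. It then remains only to evaluate this limit through the same two identities at $\alpha=0$: the Nehari identity reads $\|\nabla u_0\|_{L^2(\mathbb{B}_R)}^2=\|u_0\|_{L^p(\mathbb{B}_R)}^p$ and the energy formula reads $m_0=\frac{p-2}{2p}\|u_0\|_{L^p(\mathbb{B}_R)}^p$, which together give $\int_{\mathbb{B}_R}|\nabla u_0|^2dx=\frac{2p}{p-2}m_0$, as claimed. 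The only point needing care in this second part is the verification $0\in\mathcal D$, i.e. uniqueness of the positive solution of $-\Delta u=u^{p-1}$ on $\mathbb{B}_R$, which is classical.
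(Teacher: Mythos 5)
Your proof is correct, but it follows a genuinely different route from the paper's, most notably for the limit $\alpha\uparrow\lambda_1$. The paper proves both limits with a single compactness scheme, the same one as in Lemma~\ref{lem0002}: for $\alpha_n\uparrow\lambda_1$ it shows $m_{\alpha_n}\downarrow m_{\lambda_1}$, extracts a weak limit, upgrades to strong $H_0^1$-convergence, and observes that the limit would be a nonzero solution of $(\mathcal{P}_{\lambda_1})$ unless it vanishes; since $(\mathcal{P}_{\lambda_1})$ has no solution, the limit is $0$. For $\alpha_n\downarrow 0$ the same scheme gives strong convergence to a ground state $u_0$ of $(\mathcal{P}_0)$, whence the value $\frac{2p}{p-2}m_0$. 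Your treatment of the first limit replaces all of this by an elementary, quantitative argument: the fibering bound $m_\alpha\leq \frac{p-2}{2p}\big((\lambda_1-\alpha)\|\varphi_1\|_{L^2(\mathbb{B}_R)}^2\big)^{p/(p-2)}\|\varphi_1\|_{L^p(\mathbb{B}_R)}^{-2p/(p-2)}$, combined with the identity $\|u_\alpha\|_{L^p(\mathbb{B}_R)}^p=\frac{2p}{p-2}m_\alpha$ and Poincar\'e's inequality, yields the explicit rate $\|\nabla u_\alpha\|_{L^2(\mathbb{B}_R)}^2=O\big((\lambda_1-\alpha)^{2/(p-2)}\big)$, with no subsequence extraction and no appeal to the nonexistence of solutions of $(\mathcal{P}_{\lambda_1})$ (Poincar\'e's inequality carries that spectral information); this is a genuine strengthening, since it produces a decay rate where the paper's argument is purely qualitative. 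For the second limit you invoke Lemma~\ref{lem0002} at $\alpha_0=0$, which requires checking $0\in\mathcal{D}$, i.e.\ uniqueness of the positive solution of the Lane--Emden problem $-\Delta u=u^{p-1}$ on $\mathbb{B}_R$; this is indeed classical (radial symmetry by Gidas--Ni--Nirenberg plus ODE/scaling uniqueness), so the step is sound, though it deserves an explicit citation. Note that the paper's compactness route at $0$ needs slightly less at this point: it only uses that every weak limit is \emph{a} ground state of $(\mathcal{P}_0)$, and all ground states share the Dirichlet norm $\frac{2p}{p-2}m_0$ by the Nehari identity, so uniqueness at $\alpha=0$ is not strictly required there. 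In short, both arguments are complete: yours buys an explicit rate at $\lambda_1$ and a shorter proof at $0$; the paper's buys uniformity of method and avoids the uniqueness input at $\alpha=0$.
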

\begin{proof}
We first prove the former.  Suppose that $\alpha_n\uparrow\lambda_1$ as $n\to\infty$, then by a similar argument used in the proof of \cite[Lemma~5.2]{HWW14}, we can see that $m_{\alpha_n}\downarrow m_{\lambda_1}$ as $n\to\infty$.  It follows from a standard argument that $\{u_{\alpha_n}\}$ is bounded in $\h$.  Without loss of generality, we may assume that $u_{\alpha_n}\rightharpoonup u_{\lambda_1}$ weakly in $H_0^1(\mathbb{B}_R)$ for some $u_{\lambda_1}\in H_0^1(\mathbb{B}_R)$ as $n\to\infty$. Similarly as in the proof of Lemma~\ref{lem0002}, we obtain that  $u_{\alpha_n}=u_{\lambda_1}+o_n(1)$ strongly in $H_0^1(\mathbb{B}_R)$ and $u_{\lambda_1}$ is a solution of $(\mathcal{P}_{\lambda_1})$ if $u_{\lambda_1}\neq 0$.  Note that $(\mathcal{P}_{\lambda_1})$ has no solution, so we must have $u_{\lambda_1}=0$, which means that $\lim_{\alpha\uparrow\lambda_1}\int_{\mathbb{B}_R}|\nabla u_{\alpha}|^2dx=0$. To prove the later, let us assume that $\alpha_n\downarrow0$ as $n\to\infty$. Similarly as in the above, we can imply that $u_{\alpha_n}=u_{0}+o_n(1)$ strongly in $H_0^1(\mathbb{B}_R)$, where $u_0$ is the ground state solution of $(\mathcal{P}_0)$, so that $\lim_{\alpha\downarrow0}\int_{\mathbb{B}_R}|\nabla u_{\alpha}|^2dx=\frac{2p}{p-2}m_0$.
\end{proof}\vspace{3pt}

With Lemma~\ref{lem0004} in hands, we can give the proof of Theorem~\ref{thm0001}.\vspace{10pt}

\noindent\textbf{Proof of Theorem~\ref{thm0001}.}\quad
By Lemma~\ref{lem0004}, we have
\begin{eqnarray*}
\lim_{\alpha\downarrow0}f_{a,b,\lambda,\mu}(\alpha)=\left\{\aligned&+\infty, &\text{if } p<4,\\&
\frac{2p}{p-2}m_0 b\mu^{-1},&\text{if }p=4,\\
&0, &\text{if }p>4 \endaligned\right.
\end{eqnarray*}
and
\begin{eqnarray*}
\lim_{\alpha\uparrow\lambda_1}f_{a,b,\lambda,\mu}(\alpha)=\frac{a\lambda_1}{\lambda}.
\end{eqnarray*}
It follows from Lemma~\ref{lem0002} that $f_{a,b,\lambda,\mu}(\alpha)=1$ has a solution $\alpha_0>0$ under one of the following four cases:
\begin{enumerate}
\item[$(i)$] $p>4$ and $\lambda<a\lambda_1$;
\item[$(ii)$] $p=4$, $\lambda<a\lambda_1$ and $\frac{2p}{p-2}m_0 b\mu^{-1}<1$;
\item[$(iii)$] $p=4$, $\lambda>a\lambda_1$ and $\frac{2p}{p-2}m_0 b\mu^{-1}>1$;
\item[$(iii)$] $p<4$ and $\lambda>a\lambda_1$.
\end{enumerate}
Furthermore, a similar argument used in the proof of \cite[Lemma~5.2]{HWW14} shows that $m_{\alpha}<m_0$ for all $\alpha\in(0, \lambda_1)$.  It follows from the H\"older inequality that
\begin{eqnarray*}
\int_{\mathbb{B}_R}|\nabla u_{\alpha}|^2dx\leq\lambda_1|\mathbb{B}_R|^{\frac{p-2}{p}}\bigg(\frac{2p}{p-2}m_0\bigg)^{\frac2p}+\frac{2p}{p-2}m_0\quad\text{for all }\alpha\in(0, \lambda_1),
\end{eqnarray*}
which implies that
\begin{eqnarray*}
f_{a,b,\lambda,\mu}(\alpha)\leq\frac{a\alpha}{\lambda}+b\bigg(\lambda_1|\mathbb{B}_R|^{\frac{p-2}{p}}\bigg(\frac{2p}{p-2}m_0\bigg)^{\frac2p}+\frac{2p}{p-2}m_0\bigg)
\bigg(\frac{\alpha}{\lambda}\bigg)^{\frac{p-4}{p-2}}\mu^{\frac{2}{2-p}}.
\end{eqnarray*}
By a direct calculation, we can see that
\begin{eqnarray*}
f_{a,b,\lambda,\mu}\bigg(\frac{\lambda}{\mu}\bigg(\frac{(4-p)b\mathcal{C}}{a(p-2)}
\bigg)^{\frac{p-2}{2}}\bigg)<1
\end{eqnarray*}
under the following condition
\begin{eqnarray*}
\frac{2}{(p-2)\mu}\bigg(\frac{(p-2)a}{4-p}\bigg)^{\frac{4-p}{2}}
(b\mathcal{C})^{\frac{p-2}{2}}<1,
\end{eqnarray*}
where $\mathcal{C}=\lambda_1|\mathbb{B}_R|^{\frac{p-2}{p}}\bigg(\frac{2p}{p-2}m_0\bigg)^{\frac2p}+\frac{2p}{p-2}m_0$.
Thus, in the case $p<4$ and $\lambda<a\lambda_1$, the equation $f_{a,b,\lambda,\mu}(\alpha)=1$ has two solutions $0<\alpha_1<\alpha_2$.  By Proposition~\ref{prop0001}, we  complete the proof.
\qquad\raisebox{-0.5mm}{%
\rule{1.5mm}{4mm}}\vspace{6pt}

\subsection{The case of $2<q<p<2^*$}
In this case, it is well-known that $(0, +\infty)\subset\mathcal{D}$ and $u_\alpha$ is radial if one of the following two conditions holds:
\begin{enumerate}
\item[$(i)$] $3\leq N\leq 5$ and $(q-1)(p+1)\leq N/2$ (cf. \cite{Z95});
\item[$(ii)$] $N\geq6$ (cf. \cite{ET97}).
\end{enumerate}\vspace{3pt}

\noindent\textbf{Proof of Theorem~\ref{thm0002}.}\quad By a similar argument used in proof of Lemma~\ref{lem0004}, we also have $\lim_{\alpha\downarrow0}\int_{\mathbb{B}_R}|\nabla u_{\alpha}|^2dx=\frac{2p}{p-2}m_0$, which implies
\begin{eqnarray*}
\lim_{\alpha\downarrow0}f_{a,b,\lambda,\mu}(\alpha)=\left\{\aligned&+\infty, &\text{if }p<4;\\&
\frac{2p}{p-2}m_0 b\mu^{-1}, &\text{if }p=4;\\
&0, &\text{if }p>4,\endaligned\right.
\end{eqnarray*}
It is easy to check that $\lim_{\alpha\uparrow+\infty}f_{a,b,\lambda,\mu}(\alpha)=+\infty$, so that we deduce from Lemma~\ref{lem0002} that $f_{a,b,\lambda,\mu}(\alpha)=1$ has a solution $\alpha_0>0$ under one of the following two cases:
\begin{enumerate}
\item[$(i)$] $N=3$, $p>4$ and $(q-1)(p+1)\leq 3/2$;
\item[$(ii)$] $N=3$, $p=4$, $(q-1)(p+1)\leq 3/2$ and $\frac{2p}{p-2}m_0 b\mu^{-1}<1$.
\end{enumerate}
Furthermore, similarly as in the proof of Theorem~\ref{thm0001}, we can see that
\begin{eqnarray*}
f_{a,b,\lambda,\mu}\bigg(\frac{\lambda}{\mu}\bigg(\frac{(4-p)b\mathcal{C}_1}{(p-2)a}\bigg)^{\frac{p-q}{2}}\bigg)<1
\end{eqnarray*}
under the following condition
\begin{eqnarray*}
\frac{2}{(p-2)\mu}\bigg(\frac{(p-2)a}{4-p}\bigg)^{\frac{4-p}{2}}(\mathcal{C}_1b)^{\frac{p-2}{2}}<1,
\end{eqnarray*}
where $\mathcal{C}_1=\frac{2q}{q-2}m_0$.  Thus, $f_{a,b,\lambda,\mu}(\alpha)=1$ has two solutions $0<\alpha_1<\alpha_2$ under one of the following two cases:
\begin{enumerate}
\item[$(i)$] $p<4$, $3\leq N\leq 5$ and $(q-1)(p+1)\leq N/2$;
\item[$(ii)$] $p<4$, $N\geq6$.
\end{enumerate}
Now, the conclusions of Theorem~\ref{thm0002} follow from Proposition~\ref{prop0001}.
\qquad\raisebox{-0.5mm}{%
\rule{1.5mm}{4mm}}\vspace{6pt}

\subsection{The case of $2=q<p=2^*$}
In this case, it is well-known that $(0, \lambda_1)\subset\mathcal{D}\cap\mathcal{F}$ and $u_\alpha$ is radial if $N\geq4$ and $(\frac{1}{4}\lambda_1, \lambda_1)\subset\mathcal{D}\cap\mathcal{F}$ and $u_\alpha$ is radial if $N=3$ (cf. \cite{BN83,Z92}).  Similarly as in the previous subsection, we need to establish the following lemma before proving Theorem~\ref{thm0003}.
\begin{lemma}\label{lem0005}
If $N=3$ then $\lim_{\alpha\downarrow\frac14\lambda_1}\int_{\mathbb{B}_R}|\nabla u_{\alpha}|^2dx=\mathcal{S}^{\frac 32}$; if $N\geq4$ then $\lim_{\alpha\downarrow0}\int_{\mathbb{B}_R}|\nabla u_{\alpha}|^2dx=\mathcal{S}^{\frac N2}$ and it holds that $\lim_{\alpha\uparrow\lambda_1}\int_{\mathbb{B}_R}|\nabla u_{\alpha}|^2dx=0$ for all $N\geq3$.
\end{lemma}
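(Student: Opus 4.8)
The plan is to reduce all three limits to the behaviour of the ground-state level $m_\alpha$ and of the $L^2$-mass $\|u_\alpha\|_{L^2(\mathbb{B}_R)}^2$ of the (unique, since $\alpha\in\mathcal{D}\cap\mathcal{F}$) minimizer $u_\alpha$, exploiting that here $q=2$ and $p=2^*$. Writing $\|\cdot\|_r=\|\cdot\|_{L^r(\mathbb{B}_R)}$, every $u\in\mathcal{N}_\alpha$ satisfies $\int_{\mathbb{B}_R}|\nabla u|^2dx-\alpha\|u\|_2^2=\|u\|_{2^*}^{2^*}$, whence $I_\alpha(u)=(\tfrac12-\tfrac1{2^*})\|u\|_{2^*}^{2^*}=\tfrac1N\|u\|_{2^*}^{2^*}$. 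Applied to $u_\alpha$ this gives the two identities $\|u_\alpha\|_{2^*}^{2^*}=Nm_\alpha$ and $\int_{\mathbb{B}_R}|\nabla u_\alpha|^2dx=\alpha\|u_\alpha\|_2^2+Nm_\alpha$, so in each case it suffices to compute $\lim m_\alpha$ and to control $\alpha\|u_\alpha\|_2^2$, after which the claimed limiting value drops out. I will also use the Rayleigh-type formula $m_\alpha=\tfrac1N\inf_{u\neq0}(\int_{\mathbb{B}_R}|\nabla u|^2dx-\alpha\|u\|_2^2)^{N/2}/(\|u\|_{2^*}^{2^*})^{(N-2)/2}$, obtained by maximizing $I_\alpha$ along each ray, which in particular shows that $\alpha\mapsto m_\alpha$ is non-increasing.

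For the limit $\alpha\uparrow\lambda_1$ (all $N\ge3$) I would test along the ray $\{t\varphi_1:t>0\}$, with $\varphi_1$ a positive first eigenfunction: the maximum of $I_\alpha(t\varphi_1)$ equals $\tfrac1N t_\alpha^2(\lambda_1-\alpha)\|\varphi_1\|_2^2$ with $t_\alpha^{2^*-2}=(\lambda_1-\alpha)\|\varphi_1\|_2^2/\|\varphi_1\|_{2^*}^{2^*}\to0$, so this maximum tends to $0$; since $m_\alpha\ge0$ on $\mathcal{N}_\alpha$, we get $m_\alpha\to0$. Then $\|u_\alpha\|_{2^*}^{2^*}=Nm_\alpha\to0$, Hölder gives $\|u_\alpha\|_2^2\le|\mathbb{B}_R|^{2/N}\|u_\alpha\|_{2^*}^2\to0$, and the second identity yields $\int_{\mathbb{B}_R}|\nabla u_\alpha|^2dx\to0$.

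For $N\ge4$ and $\alpha\downarrow0$ I would read off from the Rayleigh formula that $m_\alpha\le\tfrac1N\mathcal{S}^{N/2}$ (drop the $\alpha$-term) and, using $\|u\|_2^2\le|\mathbb{B}_R|^{2/N}\mathcal{S}^{-1}\int_{\mathbb{B}_R}|\nabla u|^2dx$, the matching lower bound $m_\alpha\ge\tfrac1N(1-\alpha|\mathbb{B}_R|^{2/N}\mathcal{S}^{-1})^{N/2}\mathcal{S}^{N/2}$, so $m_\alpha\to\tfrac1N\mathcal{S}^{N/2}$. Since $\|u_\alpha\|_2^2\le|\mathbb{B}_R|^{2/N}(Nm_\alpha)^{2/2^*}$ stays bounded while $\alpha\to0$, the term $\alpha\|u_\alpha\|_2^2\to0$, and the identities give $\|u_\alpha\|_{2^*}^{2^*}\to\mathcal{S}^{N/2}$ and $\int_{\mathbb{B}_R}|\nabla u_\alpha|^2dx\to\mathcal{S}^{N/2}$.

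The hard part is $N=3$ with $\alpha\downarrow\tfrac14\lambda_1$, since now $\alpha\to\tfrac14\lambda_1\neq0$ so $\alpha\|u_\alpha\|_2^2$ does not vanish for free; the crux, and the main obstacle I expect, is to prove $\|u_\alpha\|_2^2\to0$. First, Poincar\'e together with the second identity gives $(1-\alpha/\lambda_1)\int_{\mathbb{B}_R}|\nabla u_\alpha|^2dx\le Nm_\alpha\le\mathcal{S}^{3/2}$, so $\{u_\alpha\}$ is bounded in $\h$. Along any sequence $\alpha_n\downarrow\tfrac14\lambda_1$ I would extract a weak limit $u^*\ge0$, pass to the limit in the equation to see that $u^*$ solves $(\mathcal{P}_{\frac14\lambda_1})$, and invoke the Br\'ezis--Nirenberg nonexistence of a positive solution on the ball for $\alpha\le\tfrac14\lambda_1$ (cf.\ \cite{BN83}) to force $u^*\equiv0$; the compact embedding $\h\hookrightarrow L^2(\mathbb{B}_R)$ then yields $\|u_\alpha\|_2^2\to0$ for the whole family. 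Finally, monotonicity of $m_\alpha$ gives $m_\alpha\ge m_{\alpha_0}>0$ for a fixed $\alpha_0\in(\tfrac14\lambda_1,\lambda_1)$, so $B_\alpha:=\|u_\alpha\|_{2^*}^{2^*}=Nm_\alpha$ lies in a fixed interval $[c_0,\mathcal{S}^{3/2}]$ with $c_0>0$; since $\int_{\mathbb{B}_R}|\nabla u_\alpha|^2dx=B_\alpha+o(1)$ while Sobolev gives $\int_{\mathbb{B}_R}|\nabla u_\alpha|^2dx\ge\mathcal{S}B_\alpha^{2/2^*}$, any subsequential limit $B_*$ of $B_\alpha$ satisfies $B_*\ge\mathcal{S}B_*^{2/2^*}$, i.e.\ $B_*\ge\mathcal{S}^{3/2}$, hence $B_*=\mathcal{S}^{3/2}$. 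Therefore $\int_{\mathbb{B}_R}|\nabla u_\alpha|^2dx\to\mathcal{S}^{3/2}$, completing the proof.
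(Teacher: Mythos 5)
Your proposal is correct, and for two of the three limits it takes a genuinely more elementary route than the paper. The paper proves all three statements by one and the same compactness scheme: extract a weak limit of $u_{\alpha_n}$, force it to vanish using the nonexistence of solutions at the limiting parameter (no solution of $(\mathcal{P}_{\lambda_1})$, of $(\mathcal{P}_{\lambda_1/4})$ when $N=3$, or of $(\mathcal{P}_{\alpha})$ with $\alpha\leq0$ when $N\geq4$) together with the strong maximum principle, then invoke the Sobolev dichotomy $\|\nabla u_{\alpha_n}\|_{L^2(\mathbb{B}_R)}^2\to 0$ or $\to\mathcal{S}^{N/2}$, and finally rule out the wrong alternative via the sandwich bounds $\frac1N(1-\alpha/\lambda_1)\mathcal{S}^{N/2}<I_\alpha(u_\alpha)<\frac1N\mathcal{S}^{N/2}$, which it asserts only ``by a direct calculation''. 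You instead reduce everything to the Nehari identities $\|u_\alpha\|_{L^{2^*}}^{2^*}=Nm_\alpha$ and $\|\nabla u_\alpha\|_{L^2}^2=\alpha\|u_\alpha\|_{L^2}^2+Nm_\alpha$: for $\alpha\uparrow\lambda_1$ the single test ray $t\varphi_1$ gives $m_\alpha\to0$ and hence the conclusion with no weak-convergence argument, no nonexistence theorem and no maximum principle at all; for $N\geq4$, $\alpha\downarrow0$, your Rayleigh-quotient squeeze $\frac1N\bigl(1-\alpha|\mathbb{B}_R|^{2/N}\mathcal{S}^{-1}\bigr)^{N/2}\mathcal{S}^{N/2}\leq m_\alpha\leq\frac1N\mathcal{S}^{N/2}$ likewise replaces the paper's compactness argument and, as a bonus, actually proves (a version of) the sandwich bounds the paper only states. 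Only in the case $N=3$, $\alpha\downarrow\frac14\lambda_1$ do you fall back on essentially the paper's scheme: weak limit killed by the Br\'ezis--Nirenberg nonexistence below $\frac14\lambda_1$, compact embedding $H_0^1(\mathbb{B}_R)\hookrightarrow L^2(\mathbb{B}_R)$ to get $\|u_\alpha\|_{L^2}\to0$, and the Sobolev inequality pinning $B_\alpha=Nm_\alpha$ at $\mathcal{S}^{3/2}$. Even there, your exclusion of the vanishing alternative --- monotonicity of $\alpha\mapsto m_\alpha$ from the Rayleigh formula, so $m_\alpha\geq m_{\alpha_0}>0$ --- is cleaner than the paper's lower sandwich bound; indeed the paper's stated bound, with exponent $1$ rather than $N/2$ on $(1-\alpha/\lambda_1)$, is stronger than what the obvious Rayleigh estimate yields, though either version suffices for the purpose. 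One small point of rigor: when you conclude $u^*\equiv0$ from nonexistence, state explicitly (as the paper does) that $u^*\geq0$ solves the limit equation, so $u^*\not\equiv0$ would give $u^*>0$ in $\mathbb{B}_R$ by the strong maximum principle, i.e.\ a positive solution of $(\mathcal{P}_{\lambda_1/4})$; that is the step that produces the contradiction.
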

\begin{proof}
We first show that $\lim_{\alpha\downarrow\frac14\lambda_1}\int_{\mathbb{B}_R}|\nabla u_{\alpha}|^2dx=\mathcal{S}^{\frac 32}$ if $N=3$.  Indeed, by a direct calculation, we have
\begin{eqnarray*}
\frac13(1-\frac{\alpha}{\lambda_1})\mathcal{S}^{\frac32}<I_\alpha(u_\alpha)<\frac13 \mathcal{S}^{\frac32}\quad\text{for}\quad \frac14\lambda_1<\alpha<\lambda_1.
\end{eqnarray*}
Let $\alpha_n\downarrow\frac{\lambda_1}{4}$.  Similarly as in the proof of Lemma~\ref{lem0003}, we can see that $u_{\alpha_n}\rightharpoonup u_0$ weakly in $H_0^1(\mathbb{B}_R)$ for some $u_0\in H_0^1(\mathbb{B}_R)$ as $n\to\infty$ and $I_{\frac14\lambda_1}'(u_0)=0$ in $H^{-1}(\mathbb{B}_R)$.  Since $(\mathcal{P}_{\frac{\lambda_1}{4}})$ has no solution, by the strong maximum principle, we must have that $u_0=0$ in $H_0^1(\mathbb{B}_R)$.  Note that $I_{\alpha_n}(u_{\alpha_n})<\frac13 \mathcal{S}^{\frac32}$ for all $n\in\bbn$, we get from the Sobolev embedding theorem that either
\begin{enumerate}
\item[$(a)$] $\|\nabla u_{\alpha_n}\|_{L^2(\mathbb{B}_R)}^2=o_n(1)$ or
\item[$(b)$] $\|\nabla u_{\alpha_n}\|_{L^2(\mathbb{B}_R)}^2=\mathcal{S}^{\frac32}+o_n(1)$.
\end{enumerate}
Clearly the case $(b)$ must occur since
$\frac13(1-\frac{\alpha_n}{\lambda_1})\mathcal{S}^{\frac32}<I_{\alpha_n}(u_{\alpha_n})$ and $\alpha_n\downarrow\frac{\lambda_1}{4}$.   It follows from the arbitrariness of $\{\alpha_n\}$ that $\lim_{\alpha\downarrow\frac14\lambda_1}\int_{\mathbb{B}_R}|\nabla u_{\alpha}|^2dx=\mathcal{S}^{\frac 32}$ if $N=3$.

Next, we shall  prove that $\lim_{\alpha\downarrow0}\int_{\mathbb{B}_R}|\nabla u_{\alpha}|^2dx=\mathcal{S}^{\frac N2}$ in the case $N\geq4$.  In fact, since $(\mathcal{P}_\alpha)$ has no solution for $\alpha\leq0$ and
\begin{eqnarray*}
\frac1N(1-\frac{\alpha}{\lambda_1})\mathcal{S}^{\frac N2}<I_\alpha(u_\alpha)<\frac1N \mathcal{S}^{\frac N2}\quad\text{for }0<\alpha<\lambda_1\text{ in the case }N\geq4,
\end{eqnarray*}
similar arguments used the above show that that if $N\geq4$ then $\|\nabla u_{\alpha_n}\|_{L^2(\mathbb{B}_R)}^2=\mathcal{S}^{\frac N2}+o_n(1)$ for each sequence $\{\alpha_n\}$ satisfying $\alpha_n\downarrow0$ as $n\to\infty$, so that $\lim_{\alpha\downarrow0}\int_{\mathbb{B}_R}|\nabla u_{\alpha}|^2dx=\mathcal{S}^{\frac N2}$ if $N\geq4$.

Finally, we will prove that $\lim_{\alpha\uparrow\lambda_1}\int_{\mathbb{B}_R}|\nabla u_{\alpha}|^2dx=0$.
Let $\alpha_n\uparrow\lambda_1$.  By using a similar argument in the proof of the first equality above, we reach that $u_{\alpha_n}\rightharpoonup u_0$ weakly in $H_0^1(\mathbb{B}_R)$ for some $u_0\in H_0^1(\mathbb{B}_R)$ as $n\to\infty$.  Since $(\mathcal{P}_{\lambda_1})$ has no solution, it follows from the strong maximum principle that $u_0=0$ in $H_0^1(\mathbb{B}_R)$.  Note that $\alpha_n\uparrow\lambda_1$, we can see from similar arguments used in the proof of \cite[Lemma~5.2]{HWW14} that
\begin{eqnarray*}
I_{\alpha_{n+1}}(u_{\alpha_{n+1}})\leq I_{\alpha_n}(u_{\alpha_n})\quad\text{for all }n\in\bbn.
\end{eqnarray*}
Now, it implies from $I_{\alpha_1}(u_{\alpha_1})<\frac1N \mathcal{S}^{\frac N2}$ and the Sobolev embedding theorem that $\|\nabla u_{\alpha_n}\|_{L^2(\mathbb{B}_R)}^2=o_n(1)$, so that $\lim_{\alpha\uparrow\lambda_1}\int_{\mathbb{B}_R}|\nabla u_{\alpha}|^2dx=0$.
\end{proof}

\vspace{3pt}

\noindent\textbf{Proof of Theorem~\ref{thm0003}.}\quad By Lemma~\ref{lem0005}, we can see that
\begin{eqnarray*}
\lim_{\alpha\uparrow\lambda_1}f_{a,b,\lambda,\mu}(\alpha)=\frac{a\lambda_1}{\lambda}\quad\text{and}\quad
\lim_{\alpha\downarrow\frac{\lambda_1}{4}}f_{a,b,\lambda,\mu}(\alpha)=\frac{a}{4}+\frac{b\mathcal{S}^{\frac32}}{2\mu^{\frac12}}
\end{eqnarray*}
if $N=3$,
\begin{eqnarray*}
\lim_{\alpha\uparrow\lambda_1}f_{a,b,\lambda,\mu}(\alpha)=\frac{a\lambda_1}{\lambda}\quad\text{and}\quad
\lim_{\alpha\downarrow0}f_{a,b,\lambda,\mu}(\alpha)=\frac{b\mathcal{S}^2}{\mu}
\end{eqnarray*}
if $N=4$ and
\begin{eqnarray*}
\lim_{\alpha\downarrow0}f_{a,b,\lambda,\mu}(\alpha)=+\infty\quad\text{and}\quad
\lim_{\alpha\uparrow\lambda_1}f_{a,b,\lambda,\mu}(\alpha)=\frac{a\lambda_1}{\lambda}
\end{eqnarray*}
if $N\geq5$.  It follows from Lemma~\ref{lem0003} that $f_{a,b,\lambda,\mu}(\alpha)=1$ has a solution $\alpha_0>0$ under one of the following five cases:
\begin{enumerate}
\item[$(i)$] $0<\lambda<a\lambda_1$ and $\frac{a}{4}+\frac{b\mathcal{S}^{\frac32}}{2\mu^{\frac12}}<1$ in the case $N=3$;
\item[$(ii)$] $\lambda>a\lambda_1$ and $\frac{a}{4}+\frac{b\mathcal{S}^{\frac32}}{2\mu^{\frac12}}>1$ in the case $N=3$;
\item[$(iii)$] $0<\lambda<a\lambda_1$ and $\mu>b\mathcal{S}^2$ in the case $N=4$;
\item[$(iv)$] $\lambda>a\lambda_1$ and $\mu<b\mathcal{S}^2$ in the case $N=4$;
\item[$(v)$] $\lambda>a\lambda_1$ in the case $N\geq5$.
\end{enumerate}
Now, similarly as in the proof of Theorem~\ref{thm0001}, we can see that
\begin{eqnarray*}
f_{a,b,\lambda,\mu}\bigg(\frac{\lambda}{\mu}\bigg(\frac{(N-4)b\mathcal{C}_2}{2a}
\bigg)^{\frac{2}{N-2}}\bigg)<1
\end{eqnarray*}
under the following condition
\begin{eqnarray*}
\frac{N-2}{2\mu}\bigg(\frac{2a}{N-4}\bigg)^{\frac{N-4}{N-2}}
(b\mathcal{C}_2)^{\frac{2}{N-2}}<1,
\end{eqnarray*}
where $\mathcal{C}_2=\lambda_1|\mathbb{B}_R|^{\frac2N}\mathcal{S}^{\frac{N-2}{2}}+\mathcal{S}^{\frac N2}$.
Thus, in the case $N\geq5$ and $0<\lambda<a\lambda_1$,  $f_{a,b,\lambda,\mu}(\alpha)=1$ has two solutions $0<\alpha_1<\alpha_2$.
Therefore, the conclusions of Theorem~\ref{thm0003} remain true from Proposition~\ref{prop0001}.
\qquad\raisebox{-0.5mm}{%
\rule{1.5mm}{4mm}}\vspace{6pt}

\subsection{The case of $2<q<p=2^*$}
In this case, it is well-known that $(0, +\infty)\subset\mathcal{D}\cap\mathcal{F}$ and $u_\alpha$ is radial if $N\geq4$ (cf. \cite{CZ12,ET97}) and $(\lambda_0, +\infty)\subset\mathcal{D}\cap\mathcal{F}$ and $u_\alpha$ is radial for some $\lambda_0>0$ if $N=3$ (cf. \cite{CZ12}).
\vspace{9pt}

\noindent\textbf{Proof of Theorem~\ref{thm0004}.}\quad By using a similar argument in the proof of Lemma~\ref{lem0005}, we can see that $\lim_{\alpha\downarrow0}\int_{\mathbb{B}_R}|\nabla u_{\alpha}|^2dx=\mathcal{S}^{\frac N2}$ in if $N\geq4$, which implies
\begin{eqnarray*}
\lim_{\alpha\downarrow0}f_{a,b,\lambda,\mu}(\alpha)=\left\{\aligned&+\infty,&\text{if } N\geq5;\\&
\frac{b\mathcal{S}^2}{\mu}, &\text{if }N=4.\endaligned\right.
\end{eqnarray*}
For the case of  $N=3$,  a similar argument in the proof of  Theorem~\ref{thm0001} shows that $\int_{\mathbb{B}_R}|\nabla u_{\alpha}|^2dx\leq\frac{2q}{N(q-2)}\mathcal{S}^{\frac N2}$ for all $\alpha>\lambda_0$.  It follows that
\begin{eqnarray*}
\lim_{\alpha\downarrow\lambda_0}f_{a,b,\lambda,\mu}(\alpha)\leq a\bigg(\frac{\lambda_0}{\lambda}\bigg)^{\frac{4}{6-q}}\mu^{\frac{q-2}{6-q}}+b\mathcal{C}_3
\bigg(\frac{\lambda_0}{\lambda}\bigg)^{\frac{2}{6-q}}\mu^{\frac{q-4}{6-q}},
\end{eqnarray*}
where $\mathcal{C}_3=\frac{2q}{N(q-2)}\mathcal{S}^{\frac N2}$.  Also, we can  easily check that $\lim_{\alpha\uparrow+\infty}f_{a,b,\lambda,\mu}(\alpha)=+\infty$, so that, by Lemma~\ref{lem0003}, we get that $f_{a,b,\lambda,\mu}(\alpha)=1$ has a solution $\alpha_0>0$ under one of the following two cases:
\begin{enumerate}
\item[$(i)$] $N=3$ and $a\bigg(\frac{\lambda_0}{\lambda}\bigg)^{\frac{4}{6-q}}\mu^{\frac{q-2}{6-q}}+b\mathcal{C}_3
\bigg(\frac{\lambda_0}{\lambda}\bigg)^{\frac{2}{6-q}}\mu^{\frac{q-4}{6-q}}<1$;
\item[$(ii)$] $N=4$ and $\mu>b\mathcal{S}^2$.
\end{enumerate}
Furthermore, similarly as in the proof of Theorem~\ref{thm0001}, we can obtain that
\begin{eqnarray*}
f_{a,b,\lambda,\mu}\bigg(\lambda\mu^{\frac{(2-q)(N-2)}{4}}\bigg(\frac{(N-4)b\mathcal{C}_3}{2a}\bigg)^{\frac{(2-q)N+2q}{2(N-2)}}\bigg)<1
\end{eqnarray*}
under the following condition
\begin{eqnarray*}
\frac{N-2}{2\mu}\bigg(\frac{2a}{N-4}\bigg)^{\frac{N-4}{N-2}}(\mathcal{C}_3b)^{\frac{2}{N-2}}<1.
\end{eqnarray*}
Thus, if $N\geq5$, then $f_{a,b,\lambda,\mu}(\alpha)=1$ has two solutions $0<\alpha_1<\alpha_2$.
Now, the conclusions of Theorem~\ref{thm0004} hold from Proposition~\ref{prop0001}.
\qquad\raisebox{-0.5mm}{%
\rule{1.5mm}{4mm}}\vspace{6pt}

\section{Acknowledgements}
Y. Wu thanks Prof. W. Zou for his friendship, encouragement and enlightening discussions.  Y. Wu is supported by the Fundamental Research Funds for the Central Universities (2014QNA67).

\section{Appendix}
In this section, we will find special kinds of solutions to $(\mathcal{P}_{a,b,\lambda,\mu})$ on a general bounded domain $\Omega$ in the case $2=q<p<2^*$.
It is well-known that $(\mathcal{P}_{\alpha})$ has a ground state solution if and only if $\alpha<\lambda_1$, where $\lambda_1$ is the first eigenvalue of $-\Delta$ on $\Omega$.  In order to apply Proposition~\ref{prop0001}, we will observe some bifurcation results of $u_\alpha$.  We believe that our observations are not new but since we could not find any convenient reference, we give their proofs below by Rabinowitz's global bifurcation theorem.
\begin{lemma}\label{lem1001}
There exists $\widetilde{\lambda}_0\in(0, \lambda_1)$ such that $u_\alpha$ is the unique ground state solution of $(\mathcal{P}_{\alpha})$ for $\alpha\in(\widetilde{\lambda}_0, \lambda_1)$.
\end{lemma}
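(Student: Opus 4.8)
The plan is to treat $(\mathcal{P}_\alpha)$, which in the present case $q=2$ reads $-\Delta u=\alpha u+u^{p-1}$ with $u>0$ in $\Omega$ and $u=0$ on $\partial\Omega$, as a bifurcation problem in $H_0^1(\Omega)$ with bifurcation parameter $\alpha$. Writing $K=(-\Delta)^{-1}$ for the solution operator, which is compact and self-adjoint, the equation becomes $u=\alpha Ku+K(u^{p-1})$, and $u\equiv 0$ is a solution for every $\alpha$. Since $p>2$, the term $K(u^{p-1})$ is of higher order near $u=0$, so bifurcation of nontrivial solutions can occur only at the eigenvalues of $-\Delta$. First I would record the elementary but crucial fact that every positive solution satisfies $\alpha<\lambda_1$: testing the equation against the positive first eigenfunction $\varphi_1$ (with $-\Delta\varphi_1=\lambda_1\varphi_1$) gives $(\lambda_1-\alpha)\int_\Omega u\varphi_1\,dx=\int_\Omega u^{p-1}\varphi_1\,dx>0$. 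In particular $(\mathcal{P}_{\lambda_1})$ has no solution, a fact to be used at the end.

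Next I would apply the local bifurcation theorem of Crandall--Rabinowitz at the principal eigenvalue. Because $\lambda_1$ is simple with eigenfunction $\varphi_1>0$ and the transversality condition holds, there are $\delta>0$, a neighbourhood $\mathcal{U}$ of $(\lambda_1,0)$ in $\bbr\times H_0^1(\Omega)$, and a $C^1$ curve $s\mapsto(\alpha(s),u(s))$, $|s|<\delta$, with $u(s)=s\varphi_1+o(s)$ and $\alpha(0)=\lambda_1$, such that the only nontrivial solutions of $(\mathcal{P}_\alpha)$ lying in $\mathcal{U}$ are those on this curve; for $s>0$ small the function $u(s)$ is positive, and this is the continuum furnished by Rabinowitz's global bifurcation theorem restricted near the bifurcation point. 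The key computation is the direction and non-degeneracy of the branch: inserting $u(s)=s\varphi_1+o(s)$ into the identity from the previous paragraph yields
\begin{equation*}
\lambda_1-\alpha(s)=\frac{\int_\Omega\varphi_1^p\,dx}{\int_\Omega\varphi_1^2\,dx}\,s^{p-2}+o(s^{p-2}),
\end{equation*}
so that, since $p>2$, the map $s\mapsto\alpha(s)$ is strictly decreasing on $(0,\delta)$ (after shrinking $\delta$) and is a homeomorphism onto an interval $(\widetilde\lambda_0,\lambda_1)$. Consequently, for each $\alpha\in(\widetilde\lambda_0,\lambda_1)$ there is exactly one positive solution of $(\mathcal{P}_\alpha)$ inside $\mathcal{U}$, namely $u(s(\alpha))$.

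It remains to identify the ground state $u_\alpha$ with this unique small solution. The plan is to show that $u_\alpha\to 0$ strongly in $H_0^1(\Omega)$ as $\alpha\uparrow\lambda_1$, by the same argument as in the first part of Lemma~\ref{lem0004}: the ground states stay bounded, a weak limit $u_{\lambda_1}$ solves $(\mathcal{P}_{\lambda_1})$, which has no solution by the computation above, hence $u_{\lambda_1}=0$, and subcriticality ($p<2^*$) upgrades the convergence to strong convergence in $H_0^1(\Omega)$. Therefore $(\alpha,u_\alpha)\in\mathcal{U}$ once $\alpha$ is close enough to $\lambda_1$, so $u_\alpha$ lies on the curve and, by the strict monotonicity of $\alpha(s)$, must coincide with $u(s(\alpha))$; shrinking $\widetilde\lambda_0$ if necessary gives uniqueness of the ground state on $(\widetilde\lambda_0,\lambda_1)$. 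I expect the main obstacle to be the non-degeneracy step, that is, proving that the branch does not fold back (equivalently, that $\alpha(s)$ is genuinely monotone rather than merely tangent to $\alpha=\lambda_1$) and that no ground state of comparatively large norm can persist as $\alpha\uparrow\lambda_1$; both are resolved by the leading-order expansion above together with the strong $H_0^1$-convergence of the ground states to zero.
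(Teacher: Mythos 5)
Your proposal is correct and takes essentially the same route as the paper: show that the ground states converge strongly to $0$ as $\alpha\uparrow\lambda_1$ (via nonexistence for $(\mathcal{P}_{\lambda_1})$ and the argument of Lemma~\ref{lem0004}), so that every ground state lies on the branch bifurcating from $(0,\lambda_1)$, and then use that the branch bifurcating from the simple principal eigenvalue is locally unique. Your explicit Crandall--Rabinowitz expansion of $\alpha(s)$ and the resulting monotonicity (no fold-back) make rigorous a point the paper leaves implicit in its appeal to ``a unique continuous branch of solutions near $\lambda_1$.''
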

\begin{proof}
Suppose $\alpha_n\uparrow\lambda_1$ as $n\to\infty$ and $\widetilde{u}_{\alpha_n}$ is a ground state solution of $(\mathcal{P}_{\alpha_n})$.  Similarly as in the proof of Lemma~\ref{lem0004}, we obtain that  $\widetilde{u}_{\alpha_n}\to 0$ strongly in $H_0^1(\Omega)$ as $n\to\infty$.  Thus, $\{(\widetilde{u}_\alpha, \alpha)\}$ are nontrivial branches of solutions to $(\mathcal{P}_{\alpha})$ bifurcated from the trivial branch of solutions $\{(0, \alpha)\}$ of $(\mathcal{P}_{\alpha})$ at $(0, \lambda_1)$.  In particular, $\{(u_\alpha, \alpha)\}$ is also a nontrivial branch of solutions to $(\mathcal{P}_{\alpha})$ bifurcated from the trivial branch  $\{(0, \alpha)\}$ of $(\mathcal{P}_{\alpha})$ at $(0, \lambda_1)$.  Note that $ \lambda_1$ is the principal eigenvalue of the linearized equation of $(\mathcal{P}_{\alpha})$ at $0$.  Hence, there is a unique continuous branch of solutions for $(\mathcal{P}_{\alpha})$ bifurcated from the trivial branch of solutions $\{(0, \alpha)\}$ of $(\mathcal{P}_{\alpha})$ at $(0, \lambda_1)$ near $\lambda_1$, say $\alpha\in(\widetilde{\lambda}_0, \lambda_1)$ for some $\widetilde{\lambda}_0>0$.  It follows that $\{(u_\alpha, \alpha)\}$ is the unique continuous branch of solutions for $(\mathcal{P}_{\alpha})$ bifurcated from the trivial branch of solutions $\{(0, \alpha)\}$ of $(\mathcal{P}_{\alpha})$ at $(0, \lambda_1)$ for $\alpha\in(\widetilde{\lambda}_0, \lambda_1)$.
\end{proof}

\begin{lemma}\label{lem1002}
The interval $(0, \lambda_1)$ is contained in the branch $\{(u_\alpha, \alpha)\}$.  Furthermore, for every $\alpha_n\downarrow0$ as $n\to\infty$, it holds that  $u_{\alpha_n}=u_0+o_n(1)$ strongly in $\h$, where $u_0$ is a ground state solution of $(\mathcal{P}_0)$.
\end{lemma}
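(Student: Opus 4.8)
The plan is to combine the local branch furnished by Lemma~\ref{lem1001} with Rabinowitz's global bifurcation theorem and suitable a priori bounds to cover the whole interval $(0,\lambda_1)$, and then to run a compactness argument as $\alpha\downarrow0$ in the spirit of the second part of Lemma~\ref{lem0004}. First I would recast $(\mathcal{P}_\alpha)$ as the operator equation $u=\alpha Ku+N(u)$ in $H_0^1(\Omega)$, where $K=(-\Delta)^{-1}$ is compact and $N(u)=(-\Delta)^{-1}(u_+^{p-1})$ is of higher order near $u=0$ since $p>2$. The characteristic values of $K$ are the eigenvalues of $-\Delta$, and $\lambda_1$ is simple, hence of odd multiplicity; by Lemma~\ref{lem1001} the branch $\{(u_\alpha,\alpha)\}$ is precisely the local continuum bifurcating from $(0,\lambda_1)$. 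Rabinowitz's theorem then embeds it in a maximal connected continuum $\mathcal{C}$ which either is unbounded or returns to a second bifurcation point $(\lambda_k,0)$. The latter is impossible along the positive part of $\mathcal{C}$: positivity is preserved by the strong maximum principle, while positive solutions cannot bifurcate from $\lambda_k$ with $k\geq2$, whose eigenfunctions change sign. Hence $\mathcal{C}$ is unbounded.

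Next I would control the projection $P$ of $\mathcal{C}$ onto the $\alpha$-axis, which is a connected interval since $P$ is the continuous image of a connected set. Testing $(\mathcal{P}_\alpha)$ against the positive first eigenfunction shows $(\mathcal{P}_\alpha)$ has no positive solution for $\alpha\geq\lambda_1$, so $\sup P=\lambda_1$. Because $p<2^*$ is subcritical, positive solutions satisfy uniform a priori bounds in $H_0^1(\Omega)$ for $\alpha$ in any compact subinterval of $(-\infty,\lambda_1)$, while near $\lambda_1$ they are small by bifurcation. Consequently, if $\inf P$ were positive the continuum $\mathcal{C}$ would be bounded, a contradiction; therefore $\inf P\leq0$, and since $\sup P=\lambda_1$ we get $(0,\lambda_1)\subset P$, which is the first assertion.

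For the second assertion I would fix $\alpha_n\downarrow0$ with $(u_{\alpha_n},\alpha_n)\in\mathcal{C}$, use the a priori bounds to obtain boundedness in $H_0^1(\Omega)$, extract a weak limit $u_{\alpha_n}\rightharpoonup u_0$, and pass to the limit in the weak formulation via the compact embedding $H_0^1(\Omega)\hookrightarrow L^p(\Omega)$ to see that $u_0$ solves $(\mathcal{P}_0)$, upgrading to strong convergence exactly as in Lemma~\ref{lem0002}. Positivity of $u_0$ then follows from the maximum principle once $u_0\neq0$, and $u_0\neq0$ follows from the lower bound $\|\nabla u_{\alpha_n}\|_{L^2(\Omega)}^{p-2}\geq C^{-1}(1-\alpha_n/\lambda_1)$, obtained by testing $(\mathcal{P}_{\alpha_n})$ against $u_{\alpha_n}$ and invoking the Sobolev inequality, whose right-hand side stays bounded away from zero as $\alpha_n\downarrow0$.

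I expect the main obstacle to be identifying the limit $u_0$ as a \emph{ground state} of $(\mathcal{P}_0)$ rather than merely a positive solution: on a general bounded domain $(\mathcal{P}_0)$ may admit several positive solutions of distinct energies, so strong convergence alone does not pin down the energy. I would handle this by showing that the branch solutions are least-energy solutions throughout $(0,\lambda_1)$, starting from Lemma~\ref{lem1001}, where $u_\alpha$ is the unique ground state near $\lambda_1$, and propagating the minimality along $\mathcal{C}$ using the continuity and monotonicity of $\alpha\mapsto m_\alpha$ together with the absence of secondary bifurcations of positive solutions, so that $I_{\alpha_n}(u_{\alpha_n})=m_{\alpha_n}$. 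Then $m_{\alpha_n}\to m_0$ combined with the strong convergence yields $I_0(u_0)=m_0$, identifying $u_0$ as a ground state; this minimality-propagation step is the delicate point of the argument.
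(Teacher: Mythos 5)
Your treatment of the first assertion is correct and structurally the same as the paper's: identify the branch of Lemma~\ref{lem1001} with the local continuum bifurcating from $(0,\lambda_1)$, apply Rabinowitz's global theorem, rule out the alternative that the continuum meets $(0,\lambda_k)$ with $k\geq2$ using positivity and the fact that eigenfunctions associated with $\lambda_k$, $k\geq 2$, change sign, and then use a boundedness argument to force the $\alpha$-projection down to $0$. The only substantive difference is the source of the bound. The paper argues by contradiction: if $\lambda_0:=\inf\{\alpha\mid(u_\alpha,\alpha)\in\mathcal{L}\}>0$, the monotonicity argument of \cite[Lemma~5.2]{HWW14} gives $m_\alpha<m_0$ on $(\lambda_0,\lambda_1)$, hence (Nehari identity plus H\"older) a uniform $L^2$ bound along the branch, so the continuum is bounded and confined to $\lambda_0\leq\alpha\leq\lambda_1$, contradicting Rabinowitz's alternative. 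You instead invoke Gidas--Spruck a priori bounds for subcritical problems, uniform for $\alpha$ in compact intervals. This is legitimate and in one respect more robust: it bounds \emph{all} positive solutions, whereas the paper's $L^2$ estimate tacitly presumes that the solutions on the continuum are ground states, which Lemma~\ref{lem1001} guarantees only near $\lambda_1$; the price is an appeal to a much deeper theorem. (Incidentally, your clause that solutions are ``small near $\lambda_1$ by bifurcation'' is unnecessary: the Gidas--Spruck bound is uniform on compact intervals containing $\lambda_1$; what fails at $\lambda_1$ is existence, not boundedness.)

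The genuine gap is in the second assertion, and it is the one you yourself flag: identifying the strong limit $u_0$ as a \emph{ground state} of $(\mathcal{P}_0)$. Your proposed repair---propagating minimality along $\mathcal{C}$ from Lemma~\ref{lem1001} via ``continuity and monotonicity of $\alpha\mapsto m_\alpha$ together with the absence of secondary bifurcations of positive solutions''---is not an argument. On a general bounded domain nothing excludes secondary bifurcation or multiplicity of positive solutions at a fixed $\alpha$: the continuum may contain, for the same $\alpha$, solutions at different energy levels, and it may fold in $\alpha$. Continuity and monotonicity of $m_\alpha$ say nothing about whether the solutions lying on $\mathcal{C}$ attain that level: your compactness argument does show that the set of $\alpha$ for which some solution on $\mathcal{C}$ is a ground state is relatively closed in $(0,\lambda_1)$, but propagation from $(\widetilde{\lambda}_0,\lambda_1)$ would also require openness, which is exactly what is lost without local uniqueness or nondegeneracy. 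To be fair, the paper is terse at the very same spot: its convergence step (again the argument of \cite[Lemma~5.2]{HWW14}) needs $I_{\alpha_n}(u_{\alpha_n})=m_{\alpha_n}$, i.e.\ the ground-state property along the branch away from $\lambda_1$, which is nowhere established. So the difficulty you isolate is real and shared by the paper, but your sketch does not close it; as written, your proof yields only that, along subsequences, $u_{\alpha_n}$ converges strongly to \emph{some} positive solution of $(\mathcal{P}_0)$, not necessarily a least-energy one.
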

\begin{proof}
Denote $\mathcal{L}=\{(u_\alpha, \alpha)\}$ and define $\lambda_0=\inf\{\alpha\mid(u_\alpha, \alpha)\in\mathcal{L}\}$.  Then it is easy to see that $-\infty\leq\lambda_0\leq\widetilde{\lambda}_0$.  We first prove that $\lambda_0\leq0$.  Suppose on the contrary, then by a similar argument as used in \cite[Lemma~5.2]{HWW14}, we can see that $m_{\alpha}<m_0$ for all $\alpha\in(\lambda_0, \lambda_1)$.  It follows that $\|u_\alpha\|_{L^2(\Omega)}^2<\bigg(\frac{2p}{p-2}m_0\bigg)^{\frac{2}{p}}|\Omega|^{\frac{p-2}{p}}$ for all $\alpha\in(\lambda_0, \lambda_1)$.  Let $\mathcal{U}=\{(u,\alpha)\mid\|u_\alpha\|_{L^2(\Omega)}^2<\bigg(\frac{2p}{p-2}m_0\bigg)^{\frac{2}{p}}|\Omega|^{\frac{p-2}{p}}\text{ and }\alpha\in\bbr\}$, then $\mathcal{U}$ is an open set in $L^2(\Omega)\times\bbr$, which contains the point $(0, \lambda_1)$.  By Lemma~\ref{lem1001}, $\mathcal{L}$ is a branch bifurcated from the point $(0, \lambda_1)$.  Since $\lambda_1$ is the principal eigenvalue of $-\Delta$ on $\Omega$ and there is no solution for $(\mathcal{P}_{\alpha})$ if $\alpha\geq\lambda_1$, we must get a contradiction due to Rabinowitz's global bifurcation theorem. Thus, the interval $(0, \lambda_1)$ is contained in the branch $\mathcal{L}$.  Now, by a similar argument in the proof of \cite[Lemma~5.2]{HWW14}, we can see that $u_{\alpha_n}=u_0+o_n(1)$ strongly in $H_0^1(\Omega)$ for every $\alpha_n\downarrow0$ as $n\to\infty$, where $u_0$ is a ground state solution of $(\mathcal{P}_0)$.
\end{proof}

By Lemma~\ref{lem1002}, we obtain a continuous curve in $\mathbb{S}:=\{(u_\alpha,\alpha)\mid u_\alpha\text{ is a solution of }(\mathcal{P}_\alpha)\}$, so that we can get the kinds of solutions described in Proposition~\ref{prop0001} by using similar arguments in the proof of Theorem~\ref{thm0001}.

\end{document}